\newtheorem{theorem}{Theorem}[section]
\newtheorem{lemma}[theorem]{Lemma}
\theoremstyle{definition}
\newtheorem{definition}[theorem]{Definition}
\newtheorem{example}[theorem]{Example}
\theoremstyle{remark}
\newtheorem{remark}[theorem]{Remark}
\numberwithin{equation}{section}
\begin{document}
	
	\setcounter{page}{1}
	
	\title[$J$-fusion frames]{$J$-fusion frame operator for Krein spaces}
	
	\author[Shibashis Karmakar]{Shibashis Karmakar$^1$$^{*}$}
	
	\address{$^{1}$Department of Mathematics, South Malda College,
		\newline
		Pubarun, Malda, West Bengal 732215, India.}
	\email{\textcolor[rgb]{0.00,0.00,0.84}{shibashiskarmakar@gmail.com}}
	
	
	
	
	\subjclass[2010]{Primary 46C20; Secondary 46C05, 47B50.}
	
	\keywords{Krein Space, $J$-fusion frame, Gramian operator, uniformly $J$-definite subspace, $J$-projection.}
	
	\date{Received: xxxxxx; Revised: yyyyyy; Accepted: zzzzzz.
		\newline \indent $^{*}$Corresponding author}
	
	\begin{abstract}
		In this article we find a necessary and sufficient condition under which a given collection of subspace is a $J$-fusion frame for a Krein space $\mathbb{K}$. We also approximate $J$-fusion frame bounds of a $J$-fusion frame by the upper and lower bounds of the synthesis operator. Then, we obtain the $J$-fusion frame bounds of the cannonical $J$-dual fusion frame. Finally, we address the problem of characterizing those bounded linear operators in $\mathbb{K}$ for which the image of $J$-fusion frame is also a $J$-fusion frame.
	\end{abstract} \maketitle
	
	\section{Introduction}
	The study of frames for Krein spaces was originally initiated by Giribet et al. \cite{gmmm} in 2012. However, apart from the work by Giribet et al. recently, in 2015 an independent work in this direction by Esmeral et al. \cite{efw} had been reported. The idea to extend the notion of frame theory from definite inner product spaces to indefinite inner product spaces is certainly an interesting research area which is vastly under-developed. Krein spaces has rich applications in modern mathematics \cite{jb, ta1979}. Therefore, study of frame theory for Krein spaces is useful for solving problems in Krein spaces. In \cite{khp} Karmakar et al. pointed a flaw in the definition of \cite{efw} by providing an example to establish the claim. To study frame theory for Krein spaces we will use the definition of Giribet et al. \cite{gmmm} as the basic definition since this definition is motivated purely from the geometric intuition.

	Fusion frame in Hilbert spaces has many important and fascinating applications such as distributed processing in sensor networks, Filter Bank theory, communications in packet based system etc. \cite{ckl, bcek}, Motivated by the fact we studied $J$-fusion frames in \cite{khp2017}.
	
	 This article is organized as follows. At first we give a very brief overview of the basic notations and terminologies and then in the main results we introduce some operators corresponding to $J$-fusion frames. In the next subsection we prove that if $\{(W_i,v_i):i\in{I}\}$ is a $J$-fusion frame for a Krein space $\mathbb{K}$, then $\{(J(W_i),v_i):i\in{I}\}$ is also a $J$-fusion frame for $\mathbb{K}$, which is followed by a subsection consists of an example to point out an error in \cite{khp2017} and here we introduce $J$-fusion frame equations and also two theorems containing a necessary and sufficient condition under which a fusion frame for $\mathbb{K}$ is a $J$-fusion frame for the Krein space $\mathbb{K}$. In the succeeding subsection we approximate $J$-fusion frame bounds of a $J$-fusion frame by the upper and lower bounds of the fusion frame operator. $J$-frame operator and $J$-fusion frame operator discussed in the subsequent subsection and also in this section we calculate the $J$-fusion frame bounds of the cannonical $J$-dual fusion frame. In the last subsection we address the problem of characterizing those bounded linear operators in $\mathbb{K}$ for which the image of $J$-fusion frame is again a $J$-fusion frame by providing another necessary and sufficient condition.
	
	 In the following paragraphs we briefly recapitulate the basic notations and terminologies. 
	
	\subsection{Backgrounds and terminologies} Here we introduce some notations very briefly. For a more detailed version of the definitions and notations we refer \cite{k2018}. Let $M$ be a closed subspace of a Krein space $\mathbb{K}$ and $\pi_M$ be an orthogonal projection from $\mathbb{K}$ onto $M$. So, we have $\pi_M^2=\pi_M$ and $\pi_M^*=\pi_M$. Here the range of the projection, $R(\pi_M)=M$ and Null space of $\pi_M$, $N(\pi_M)=M^{\perp}$. Now if $M$ is a projectively complete subspace of $\mathbb{K}$ then the $J$-orthogonal projection from $\mathbb{K}$ onto $M$ exists. Let $Q_M$ be the $J$-orthogonal projection from $\mathbb{K}$ onto $M$. Here range of the $J$-projection $Q_M$, $R(Q_M)=M$ and Null space, $N(Q_M)=M^{[\perp]}$.
	
	Let $\pi_M^{\#}$ be the $J$-adjoint of $\pi_M$. Then we have $\pi_M^{\#}=J\pi_M{J}$ and since  $\pi_{JM}=J\pi_M{J}$, thus we have $\pi_{JM}=\pi_M^{\#}$.
	
	Let $W$ be a subspace of a Krein space $\mathbb{K}$. Also, let us assume that $\mathbb{P}^{++}$ denotes the set of all $J$-positive subspaces of $\mathbb{K}$ while $\mathbb{P}^{+}$ denotes the set of all $J$-non-negative subspaces of $\mathbb{K}$. Similarly, let $\mathbb{P}^{--}$ and $\mathbb{P}^{-}$ respectively denote the set of all $J$-negative and $J$-non-positive subspaces of $\mathbb{K}$. Also let $\tilde{\mathbb{P}}$ be the set of all neutral subspaces of $\mathbb{K}$.
	 Then, $W\in{\mathbb{P}^{+}\cup\mathbb{P}^{-}\cup\tilde{\mathbb{P}}}$. Throughout in our work we consider either $W\in{\mathbb{P}^{+}\cup\mathbb{P}^{--}}$ or $W\in{\mathbb{P}^{++}\cup\mathbb{P}^{-}}$. Without any loss of generality, we assume $W\in{\mathbb{P}^{+}\cup\mathbb{P}^{--}}$ to establish our results.
	
	Let $\{W_i:i\in I\}$ be a collection of subspaces of the Krein space $\mathbb{K}$ such that $W_i\in{\mathbb{P}^{+}\cup\mathbb{P}^{--}},~\forall{i\in I}$. We consider the space $\big(\sum_{i\in{I}}\oplus{W_i}\big)$. If $f\in\big(\sum_{i\in{I}}\oplus{W_i}\big)$ then $f=\{f_i\}_{i\in I}$, where $f_i\in W_i$ for each $i\in I$. Let $I_+=\{i\in{I}:[f_i,f_i]\geq 0~\textmd{for all~} f_i\in{W_i}\}$ and $I_-=\{i\in{I}:[f_i,f_i]<0~\textmd{for all~} f_i\in{W_i}\}$. We define $[f,g]=\sum_{i\in{I}}[f_i,g_i]$, where $f,g\in\big(\sum_{i\in{I}}\oplus{W_i}\big)$. If the series is unconditionally convergent then $[\cdot,\cdot]$ defines an inner product on $\big(\sum_{i\in{I}}\oplus{W_i}\big)$. Now consider the space $\big(\sum_{i\in{I}}\oplus{W_i}\big)_{\ell_2}=\Big\{f:\big(\sum_{i\in{I}}\oplus{W_i}\big):\sum_{i\in{I}}\|f_i\|_J^2<\infty\Big\}$. We will use this space frequently in our work.  
	
	The definition of $J$-fusion frame is already given in \cite{khp2017} but we have observed that the Theorem 2.4 in \cite{khp2017} is not always true. So, in this article we deduce $J$-fusion frame equations (correcting from our earlier results) for Krein spaces to obtain more important results. For the sake of completeness of this article the definition of $J$-fusion frame is given below.\\
	Let $\mathbb{F}=\{(W_i,v_i):i\in{I}\}$ be a Bessel family of closed subspaces of a Krein space $\mathbb{K}$ with synthesis operator $T_{W,v}\in{L\Big(\big(\sum_{i\in{I}}\oplus{W_i}\big)_{\ell_2},\mathbb{K}\Big)}$ such that $W_i\in{\mathbb{P}^{+}\cup\mathbb{P}^{--}},~\forall{i\in I}$. Let $I_+=\{i\in{I}:[f_i,f_i]\geq 0~\textmd{for all~} f_i\in{W_i}\}$ and $I_-=\{i\in{I}:[f_i,f_i]<0~\textmd{for all~} f_i\in{W_i}\}$. Now consider the orthogonal decomposition of $\big(\sum_{i\in{I}}\oplus{W_i}\big)_{\ell_2}$ given by
	$$\big(\sum_{i\in{I}}\oplus{W_i}\big)_{\ell_2}=\big(\sum_{i\in{I_+}}\oplus{W_i}\big)_{\ell_2}\bigoplus{\big(\sum_{i\in{I_-}}\oplus{W_i}\big)_{\ell_2}}$$
	and denote by $P_{\pm}$ the orthogonal projection onto $(\sum_{i\in{I_{\pm}}}\oplus{W_i})_{\ell_2}$. Also, let ${T_{W,v}}_{\pm}=T_{W,v}P_{\pm}$. If $M_{\pm}=\overline{\sum_{i\in{I_{\pm}}}W_i}$, notice that $\sum_{i\in{I_{\pm}}}W_i\subseteq{R({T_{W,v}}_{\pm})}\subseteq{M_{\pm}}$ and $R(T_{W,v})=R({T_{W,v}}_+)+R({T_{W,v}}_-)$.
	\begin{definition}\label{DJFF}
		The Bessel family of closed subspaces $\mathbb{F}=\{(W_i,v_i):i\in{I}\}$ is a \textit{$J$-fusion frame} for $\mathbb{K}$ if $R({T_{W,v}}_+)$ is a maximal uniformly $J$-positive subspace of $\mathbb{K}$ and $R({T_{W,v}}_-)$ is a maximal uniformly $J$-negative subspace of $\mathbb{K}$.
	\end{definition}
	Let $\{(W_i,v_i):i\in{I}\}$ be a $J$-fusion frame for $\mathbb{K}$ then $\big(\sum_{i\in{I}}\oplus{W_i},[\cdot,\cdot]\big)$ is a Krein space. The fundamental symmetry, denoted by $J_2$, is defined as $J_2(f)=\{f_i:i\in I_+\}\cup\{-f_i:i\in I_-\}$ for all $f$. Also $[f,g]_{J_2}=\sum_{i\in{I_+}}[f_i,g_i]-\sum_{i\in{I_-}}[f_i,g_i]$.
	
	\section{Main results}
	\subsection{Operators corresponding $J$-fusion frame}
	Let $\mathbb{F}=\{(W_i,v_i):i\in{I}\}$ be a $J$-fusion frame for the Krein space $\mathbb{K}$. Then $\{W_i:i\in{I_+}\}$ is a collection of uniformly $J$-positive subspaces of $\mathbb{K}$ and $\{W_i:i\in{I_-}\}$ is a collection of uniformly $J$-negative subspaces of $\mathbb{K}$. Let $T_{W,v}^{\#}$ be the $J$-adjoint operator of the synthesis operator $T_{W,v}$ which is called the analysis operator of the $J$-fusion frame $\mathbb{F}$. Now, $T_{W,v}^{\#}=(T_{{W,v}_+}^{\#}+T_{{W,v}_-}^{\#})$ and $N(T_{{W,v}_+}^{\#})^{[\perp]}=R(T_{{W,v}_+})=M_+$.
	
	We know that $T_{{W,v}_+}^{\#}=T_{{W,v}_+}^*J=\{v_i\pi_{W_i}J(f)\}_{i\in{I_+}}$ for all $f\in{\mathbb{K}}$. Similarly as above we have $T_{{W,v}_-}^{\#}(f)=-\{v_i\pi_{W_i}J(f)\}_{i\in{I_-}}$ for all $f\in{\mathbb{K}}$. So, $T_{W,v}^{\#}(f)=\{\sigma_iv_i\pi_{JW_i}(f)\}_{i\in{I}}$ for all $f\in{\mathbb{K}}$.
	Here $\sigma_{i}=1$ if $i\in{I_+}$ and $\sigma_{i}=-1$ if $i\in{I_-}$.
	\begin{lemma}\label{LEMMA}
		Let $\{(W_i,v_i):i\in{I}\}$ be a $J$-fusion frame for the Krein space $\mathbb{K}$. Then $\{(J(W_i),v_i):i\in{I}\}$ is also a $J$-fusion frame for $\mathbb{K}$.
	\end{lemma}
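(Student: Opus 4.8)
The plan is to analyze the synthesis operator of the candidate family $\{(J(W_i),v_i):i\in I\}$ and show that its positive and negative parts again have ranges that are maximal uniformly $J$-positive and $J$-negative, respectively. The first step is to verify that the index sets behave well: since $J$ reverses the sign of the indefinite inner product, $[J f_i, J f_i] = [f_i, f_i]$ actually (because $[J\cdot,J\cdot]=[\cdot,\cdot]$ for the fundamental symmetry), so in fact $W_i \in \mathbb{P}^{+}\cup\mathbb{P}^{--}$ forces $J(W_i)$ into the same class, and the partition $I = I_+ \cup I_-$ is unchanged. Hence the orthogonal decomposition of $\big(\sum_{i\in I}\oplus J(W_i)\big)_{\ell_2}$ along $I_\pm$ matches the original one, and the same projections $P_\pm$ serve both families.

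Next I would compute the synthesis operator $T_{JW,v}$ explicitly. Using the computation recorded just before the lemma, the analysis operator of the original frame satisfies $T_{W,v}^{\#}(f)=\{\sigma_i v_i \pi_{JW_i}(f)\}_{i\in I}$, and since $\pi_{JW_i} = J\pi_{W_i}J = \pi_{W_i}^{\#}$, one sees that the synthesis operator of $\{(J(W_i),v_i)\}$ is related to that of $\{(W_i,v_i)\}$ by conjugation with $J$: concretely $T_{JW,v} = J\, T_{W,v}$ on each component, since $\pi_{JW_i} = J\pi_{W_i}J$ and the extra $J$ on the right acts trivially on elements of $W_i$ once we identify $\big(\sum\oplus J(W_i)\big)_{\ell_2}$ with $\big(\sum\oplus W_i\big)_{\ell_2}$ via $J$. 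Therefore $T_{JW,v}{}_{\pm} = J\,T_{W,v}{}_{\pm}$ and consequently $R(T_{JW,v}{}_{\pm}) = J\big(R(T_{W,v}{}_{\pm})\big)$.

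The final step is to observe that $J$ maps maximal uniformly $J$-positive subspaces to maximal uniformly $J$-negative subspaces and vice versa: if $L$ is uniformly $J$-positive with $[x,x]\ge \alpha\|x\|_J^2$ on $L$, then for $y\in J(L)$ we have $[y,y] = [Jx,Jx]$ with a sign flip in the relevant estimate — more precisely $-[y,y] = -[Jx,Jx]$, and since $J$ is a $J$-unitary isometry of $(\mathbb{K},\|\cdot\|_J)$ one gets $-[y,y]\ge \alpha\|y\|_J^2$, so $J(L)$ is uniformly $J$-negative; maximality is preserved because $J$ is a bijection of $\mathbb{K}$. Applying this to $R(T_{JW,v}{}_{+}) = J(R(T_{W,v}{}_{+}))$ (maximal uniformly $J$-negative) and $R(T_{JW,v}{}_{-}) = J(R(T_{W,v}{}_{-}))$ (maximal uniformly $J$-positive), we find the roles of the $\pm$ parts are swapped, which is exactly what is required for $\{(J(W_i),v_i):i\in I\}$ to be a $J$-fusion frame (the definition is symmetric in the two conditions). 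It remains to note that Besselness is preserved since $J$ is bounded with bounded inverse, so the Bessel bound only changes by the norm of $J$, which is $1$.

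The step I expect to be the main obstacle is the careful bookkeeping in identifying $T_{JW,v}$ with $J\,T_{W,v}$ — in particular making the identification of the two $\ell_2$-direct-sum spaces precise and checking that $\pi_{JW_i}$ genuinely implements the conjugation $J\pi_{W_i}J$ at the level of the synthesis operators rather than merely the analysis operators. Once that identification is clean, the maximality and uniform-definiteness transfer is essentially formal.
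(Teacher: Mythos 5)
Your overall strategy --- identifying $T_{JW,v}$ with $J\,T_{W,v}$ via the componentwise identification of the two $\ell_2$-sums and then transporting the maximal uniform definiteness of $R({T_{W,v}}_{\pm})$ through $J$ --- is sound, and it is in fact a different route from the paper's: the paper instead verifies the frame inequality (\ref{JFFEQ}) for $\{(J(W_i),v_i)\}$ on $J(M_\pm)$ using $\pi_{JW_i}=J\pi_{W_i}J$, obtaining the same bounds. However, your final step contains a genuine sign error that contradicts your own first paragraph. You correctly observe at the outset that $[Jx,Jx]=[x,x]$ (write $x=x_++x_-$ in the canonical decomposition; then $Jx=x_+-x_-$ and the cross terms vanish), so $J$ \emph{preserves} the sign of every vector and hence the partition $I=I_+\cup I_-$. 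But you then assert that $J$ sends maximal uniformly $J$-positive subspaces to maximal uniformly $J$-negative ones, via the chain ``$-[y,y]=-[Jx,Jx]\ge\alpha\|y\|_J^2$''. That inequality is false: for $x$ in a uniformly $J$-positive subspace $L$ and $y=Jx$ one has $-[y,y]=-[x,x]\le-\alpha\|x\|_J^2<0$, which cannot dominate $\alpha\|y\|_J^2$. The correct statement is that $J(L)$ is again uniformly $J$-\emph{positive} with the same constant, since $[y,y]=[x,x]\ge\alpha\|x\|_J^2=\alpha\|Jx\|_J^2=\alpha\|y\|_J^2$ ($J$ being a $\|\cdot\|_J$-isometry), and maximality transfers because $J$ is an involutive bijection preserving the class $\mu^+$.

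The good news is that the correction simplifies rather than destroys your argument: the roles of the $\pm$ parts are \emph{not} swapped, so $R({T_{JW,v}}_{+})=J\big(R({T_{W,v}}_{+})\big)$ is maximal uniformly $J$-positive and $R({T_{JW,v}}_{-})=J\big(R({T_{W,v}}_{-})\big)$ is maximal uniformly $J$-negative, which is exactly what the definition of a $J$-fusion frame requires; no appeal to any symmetry of the definition under exchanging the two conditions is needed (and such an exchange would in any case be inconsistent with your unchanged partition $I=I_+\cup I_-$). With that one repair, together with the bookkeeping you already flagged for the identification $T_{JW,v}=J\,T_{W,v}$, the proof goes through.
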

	\begin{proof}
		Let $I_+=\{i\in{I}:[f_i,f_i]> 0~\textmd{for all~} f_i\in{W_i}\}$ and $I_+=\{i\in{I}:[f_i,f_i]<0~\textmd{for all~} f_i\in{W_i}\}$. Then $W_i\subset{M_+}$ for all $i\in{I_+}$ and $W_i\subset{M_-}$ for all $i\in{I_-}$. Then we have $J(W_i)\subset{J(M_+)}$ for all $i\in{I_+}$ and $J(W_i)\subset{J(M_-)}$ for all $i\in{I_-}$. Since $\{(W_i,v_i):i\in{I}\}$ be a $J$-fusion frame for $\mathbb{K}$, so $M_+=\overline{\sum_{i\in{I_+}}W_i}$ and $M_-=\overline{\sum_{i\in{I_-}}W_i}$. From these it readily follows that $J(M_+)=\overline{\sum_{i\in{I_+}}J(W_i)}$ and $J(M_-)=\overline{\sum_{i\in{I_-}}J(W_i)}$.
		
		Let $T_{W,v}$ be the synthesis operator for the $J$-frame $\{(W_i,v_i):i\in{I}\}$ then $R(J{T_{W,v}}_\pm)=JR({T_{W,v}}_\pm)=J(M_\pm)$. So $J{T_{W,v}}$ is the synthesis operator for the collection $\{(J(W_i),v_i):i\in{I}\}$. So $T_{JW,v}:=J{T_{W,v}}$. Now the subspaces $JW_i$ is closed for all $i\in{I}$ and $R({T_{JW,v}}_\pm)=J(M_\pm)$ are maximal uniformly $J$-positive ($J$-negative) subspaces of $\mathbb{K}$ respectively. So from the definition \ref{DJFF} we can say that $\{(J(W_i),v_i):i\in{I}\}$ is a $J$-fusion frame for $\mathbb{K}$.  
	\end{proof}
	Let $\{(W_i,v_i)\}_{i\in I}$ be a $J$-fusion frame for the Krein space $\mathbb{K}$. Then $I_+=\{i\in{I}:[f_i,f_i]> 0~\textmd{for all~} f_i\in{W_i}\}$ and $I_-=\{i\in{I}:[f_i,f_i]<0~\textmd{for all~} f_i\in{W_i}\}$. The linear operator $S_{W,v}:\mathbb{K}\rightarrow{\mathbb{K}}$ defined by $S_{W,v}(f)=\sum_{i\in I}\sigma_iv_i^2\pi_{W_i}J(f),~f\in\mathbb{K}$, is said to be the \textit{$J$-fusion frame operator} for the $J$-fusion frame $\{(W_i,v_i)\}_{i\in I}$. Here $\sigma_{i}=1$ if $i\in{I_+}$ and $\sigma_{i}=-1$ if $i\in{I_-}$.
	From the above it readily follows that $S_{W,v}=T_{W,v}T_{W,v}^{\#}$. Also $S_{W,v}$ is the sum of two $J$-positive operators. Let $S_{{W,v}_+}:\mathbb{K}\rightarrow{\mathbb{K}}$ is defined by $S_{{W,v}_+}(f)=\sum_{i\in I_+}v_i^2\pi_{W_i}J(f)$. Then $[S_{{W,v}_+}(f),f]=[\sum_{i\in I_+}v_i^2\pi_{W_i}J(f),f]=\sum_{i\in I_+}v_i^2[\pi_{W_i}J(f),J(f)]_J$. So, $S_{{W,v}_+}$ is a $J$-positive operator and also $S_{{W,v}_+}=T_{{W,v}_+}T_{{W,v}_+}^{\#}$. Similarly, let $S_{{W,v}_-}=-T_{{W,v}_-}T_{{W,v}_-}^{\#}$. Then $S_{{W,v}_-}$ is also a $J$-positive operator. We have $S_{W,v}=S_{{W,v}_+}-S_{{W,v}_-}$. If $\{(W_i,v_i)\}_{i\in I}$ is a $J$-fusion frame for the Krein space $\mathbb{K}$ with synthesis operator $T_{W,v}\in{L\Big(\big(\sum_{i\in{I}}\oplus{W_i}\big)_{\ell_2},\mathbb{K}\Big)}$ then the $J$-fusion frame operator $S_{W,v}$ is bijective and $J$-selfadjoint.
	
	Given a closed subspace $M$ of $\mathbb{K}$, the Gramian operator $G_M$ is defined as $G_M:=\pi_MJ|_M$. If $M$ is $J$-non-negative then the Gramian operator is $J$-selfadjoint, bounded and positive.
	
	\subsection{$J$-fusion frame equation}
	In \cite{khp2017} Karmakar et al. assumed that ${\pi_{W_i}|}_{M_{\pm}}={Q_{W_i}|}_{M_{\pm}}$, but it is wrong as we can see from the following example.
	\begin{example}
		Let \[
		J=
		\begin{bmatrix}
		1 & 0 & 0 \\
		0 & 1 & 0 \\
		0 & 0 & -1 
		\end{bmatrix}
		\]
		in $\mathbb{C}^3$ and let $M=\{(x,y,z):z=\epsilon(x+y)\}$, where $\epsilon\in(0,\frac{1}{\sqrt2})$. Then $M$ is uniformly $J$-positive and so is its subspace $W=span\{w\}$, where $w=(0,1,\epsilon)$. Now let $w_0=\frac{w}{{\|w\|}_J}=\frac{w}{\sqrt{1+\epsilon^2}}$ and $w_1=\frac{w}{\sqrt{[w,w]}}=\frac{w}{\sqrt{1-\epsilon^2}}$. Then $\pi_{W}(x)=\frac{1}{1+\epsilon^2}\langle x,w\rangle w$ and $Q_{W}(x)=\frac{1}{1-\epsilon^2}[x,w]w$, for $x\in\mathbb{C}^3$. Hence for $x=(1,1,2\epsilon)\in M$ we get,\\
		$$\pi_{W}(x)=\frac{1+2\epsilon^2}{1+\epsilon^2}w~\textmd{~and ~~}~Q_{W}(x)=\frac{1-2\epsilon^2}{1-\epsilon^2}w$$.
		So, ${\pi_{W_i}|}_{M}\neq{Q_{W_i}|}_{M}$. 
	\end{example}
	Now, according to the definition of $J$-fusion frame the collection $\{(W_i,v_i):i\in{I_{\pm}}\}$ must be a fusion frame for $(M_\pm,\pm[\cdot,\cdot])$. 
	
	Let us consider the operator $\pi_{W_i}J$, for $i\in{I_+}$. For the purpose of our work let us assume that $W_i$ is a subspace of $M_+$, where $M_+$ is uniformly $J$-positive, hence $M_+$ is projectively complete. Hence $\mathbb{K}=M_+[\oplus]{M_+}^{[\perp]}$ and also $\mathbb{K}={W_i}[\oplus]{{W_i}}^{[\perp]}$. It is easy to see that $N(\pi_{W_i}J)={{W_i}}^{[\perp]}$ and $R(\pi_{W_i}J)={W_i}$. Also, we have $(\pi_{W_i}J)^\#=J\pi_{W_i}^\#=J\pi_{JW_i}=\pi_{W_i}J$ i.e. $\pi_{W_i}J$ is $J$-selfadjoint.\\
	
	With respect to the above observations we have the following result.
	\begin{theorem}
		Let $\{(W_i,v_i):i\in{I}\}$ be a $J$-fusion frame for the Krein space $\mathbb{K}$. Then $\{(W_i,v_i):i\in{I_{\pm}}\}$ is a fusion frame for $(M_\pm,\pm[\cdot,\cdot])$ i.e.
		\begin{equation}\label{JFFEQ}
		A_{\pm}[f,f]\leq\sum_{i\in{I_{\pm}}}v_i^2|[{\pi_{W_i}}J(f),f]|~{\leq}~B_{\pm}[f,f]~\textmd{for every }f\in{M_\pm},
		\end{equation}
		where $-\infty<B_-\leq{A_-}<0<A_+\leq{B_+}<\infty$ are constants.	
	\end{theorem}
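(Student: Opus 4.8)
The plan is to establish the ``$+$'' inequality over $M_+$ in detail and then obtain the ``$-$'' inequality by running the identical argument in the Krein space $(\mathbb{K},-[\cdot,\cdot])$, in which $I_-$, $M_-$, $-J$ play the roles of $I_+$, $M_+$, $J$; relabelling the two positive constants produced there gives $-\infty<B_-\le A_-<0$. So fix $f\in M_+$ and $i\in I_+$. Since $W_i\subseteq M_+$ with $M_+$ uniformly $J$-positive, $W_i$ is $J$-positive and on $M_+$ one has $\beta\|g\|_J^2\le[g,g]\le\|g\|_J^2$ for some $\beta>0$ and all $g\in M_+$, so $(M_+,[\cdot,\cdot])$ is a Hilbert space. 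By the observations preceding the theorem, $\pi_{W_i}J$ is a $J$-selfadjoint idempotent with range $W_i$; hence, writing $g_i:=\pi_{W_i}Jf\in W_i$ and using $\pi_{W_i}J(f-g_i)=0$ together with $J$-selfadjointness, $[g_i,f-g_i]=[f,\pi_{W_i}J(f-g_i)]=0$, so $[\pi_{W_i}Jf,f]=[g_i,g_i]\ge 0$ (equivalently, $\pi_{W_i}J|_{M_+}$ is the $[\cdot,\cdot]$-orthogonal projection of $M_+$ onto $W_i$). Therefore the modulus signs in $(\ref{JFFEQ})$ are redundant and $\sum_{i\in I_+}v_i^2|[\pi_{W_i}Jf,f]|=\sum_{i\in I_+}v_i^2[\pi_{W_i}Jf,f]=[S_{{W,v}_+}f,f]$, so it suffices to find $0<A_+\le B_+$ with $A_+[f,f]\le[S_{{W,v}_+}f,f]\le B_+[f,f]$ on $M_+$, where $S_{{W,v}_+}=T_{{W,v}_+}T_{{W,v}_+}^{\#}$.

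Both estimates I would read off from the identity $[S_{{W,v}_+}f,f]=[T_{{W,v}_+}^{\#}f,T_{{W,v}_+}^{\#}f]$, valid for $f\in M_+$ by the $J$-adjoint relation, the right-hand side computed in $\big(\sum_{i\in I_+}\oplus W_i\big)_{\ell_2}$. Because the $W_i$ with $i\in I_+$ sit inside the uniformly $J$-positive $M_+$, they share a common positivity constant, hence $[\cdot,\cdot]$ is an equivalent Hilbert inner product on $\big(\sum_{i\in I_+}\oplus W_i\big)_{\ell_2}$ and the Krein estimate $\beta\|h\|^2\le[h,h]\le\|h\|^2$ holds there. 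For the upper bound this gives $[S_{{W,v}_+}f,f]\le\|T_{{W,v}_+}^{\#}f\|^2\le\|T_{{W,v}_+}^{\#}\|^2\|f\|_J^2\le(\|T_{{W,v}_+}^{\#}\|^2/\beta)[f,f]$, boundedness of $T_{{W,v}_+}^{\#}$ being the Bessel hypothesis; take $B_+:=\|T_{{W,v}_+}^{\#}\|^2/\beta$.

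The lower bound is the crux, and it is here that the definition of a $J$-fusion frame is really used. From the same identity, $[S_{{W,v}_+}f,f]\ge\beta\|T_{{W,v}_+}^{\#}f\|^2$ for $f\in M_+$, so it is enough to bound $T_{{W,v}_+}^{\#}$ below on $M_+$. Since the family is a $J$-fusion frame, $R(T_{{W,v}_+})=M_+$ is closed and $N(T_{{W,v}_+}^{\#})=M_+^{[\perp]}$; projective completeness of $M_+$ gives $\mathbb{K}=M_+[\oplus]M_+^{[\perp]}$, so $T_{{W,v}_+}^{\#}|_{M_+}$ is injective and $R(T_{{W,v}_+}^{\#}|_{M_+})=R(T_{{W,v}_+}^{\#})$. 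As $T_{{W,v}_+}^{\#}=T_{{W,v}_+}^{*}J$ with $J$ boundedly invertible, $R(T_{{W,v}_+}^{\#})=R(T_{{W,v}_+}^{*})$, which is closed by the closed-range theorem because $R(T_{{W,v}_+})$ is closed. Thus $T_{{W,v}_+}^{\#}|_{M_+}$ is a continuous bijection of $M_+$ onto the closed subspace $R(T_{{W,v}_+}^{\#})$, and the bounded inverse theorem supplies $c>0$ with $\|T_{{W,v}_+}^{\#}f\|\ge c\|f\|_J$ for all $f\in M_+$; hence $[S_{{W,v}_+}f,f]\ge\beta c^2\|f\|_J^2\ge\beta c^2[f,f]$, so $A_+:=\beta c^2$ works.

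Finally $A_+\le B_+$ is automatic since both bound $[S_{{W,v}_+}f,f]/[f,f]$ on $M_+\setminus\{0\}$, and running the whole argument in $(\mathbb{K},-[\cdot,\cdot])$ over $I_-$, $M_-$ yields the companion constants with $B_-\le A_-<0$, completing the proof. I expect the lower bound to be the only delicate step: one has to extract boundedness-below of $T_{{W,v}_+}^{\#}|_{M_+}$ from the closed-range information packaged in ``$R(T_{{W,v}_+})$ maximal uniformly $J$-positive,'' and to verify that the $\ell_2$-direct sum is genuinely Hilbert under $[\cdot,\cdot]$, which relies on the $W_i$ being \emph{uniformly} (not merely pointwise) $J$-positive, a consequence of their all lying in $M_+$.
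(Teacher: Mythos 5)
The paper never actually writes out a proof of this theorem: it is asserted to ``follow from the above observations'' about the operator $\pi_{W_i}J$, and the substantive two-sided estimate, with explicit constants, is carried out in the subsequent subsection on bounds. Your argument follows that same route --- rewrite $\sum_{i\in I_+}v_i^2[\pi_{W_i}Jf,f]$ as the squared norm of $T_{{W,v}_+}^{\#}f$, get the upper bound from boundedness of the analysis operator, get the lower bound from its boundedness below on $M_+$ (you use closed range plus the bounded inverse theorem where the paper invokes the reduced minimum modulus $\gamma(T_{{W,v}_+})$, which encodes the same fact), and pass between $[\cdot,\cdot]$ and $\|\cdot\|_J$ via the uniform definiteness of $M_\pm$. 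Your handling of the lower bound is, if anything, more explicit than the paper's.

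The one step that does not survive scrutiny is the claim that $\pi_{W_i}J$ is idempotent, from which you deduce $[g_i,f-g_i]=0$ and hence $[\pi_{W_i}Jf,f]=[g_i,g_i]$ for $g_i=\pi_{W_i}Jf$. You took this from the observations displayed just before the theorem, but those observations are false: in the setting of the paper's own Example (with $J=\mathrm{diag}(1,1,-1)$ and $W=\mathrm{span}\{(0,1,\epsilon)\}$) one computes $(\pi_{W}J)^2=\tfrac{1-\epsilon^2}{1+\epsilon^2}\,\pi_{W}J\neq\pi_{W}J$, so $\pi_{W_i}J(f-g_i)=g_i-(\pi_{W_i}J)^2f\neq 0$ and your orthogonality computation breaks down; correspondingly $[\pi_{W}Jf,f]\neq[\pi_{W}Jf,\pi_{W}Jf]$ in general. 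The correct identity is $[\pi_{W_i}Jf,f]=\langle\pi_{W_i}Jf,Jf\rangle=\|\pi_{W_i}Jf\|_J^2$, which still yields nonnegativity (so the modulus signs are indeed redundant) and is uniformly comparable to $[g_i,g_i]$ precisely because every $W_i$ with $i\in I_+$ lies in the uniformly $J$-positive $M_+$; with that replacement your upper and lower estimates go through verbatim with adjusted constants. Since the paper's own bound computation makes the identical substitution $\sum_{i\in I_+}v_i^2[\pi_{W_i}Jf,f]=\|T_{{W,v}_+}^{\#}f\|_{J_2}^2$, this is a defect you share with the source rather than one you introduced, but it is a genuine gap in the proof as written and should be repaired as indicated.
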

\begin{proof}
	Since $\{(W_i,v_i):i\in{I}\}$ be a $J$-fusion frame for the Krein space $\mathbb{K}$, so $R({T_{W,v}}_\pm)=M_\pm$ is maximal uniformly $J$-positive ($J$-negative) subspaces of $\mathbb{K}$, hence closed. So, ${T_{W,v}}_\pm$ are surjection from $\big(\sum_{i\in{I}}\oplus{W_i}\big)_{\ell_2}$ onto the Hilbert spaces $(M_+,\pm[\cdot,\cdot])$. Therefore, $\{(W_i,v_i):i\in{I_\pm}\}$ are fusion frames for  $(M_+,\pm[\cdot,\cdot])$. So $[{T_{W,v}}_+{T_{W,v}^{\#}}_+(f),f]=\sum_{i\in{I_{\pm}}}v_i^2[{\pi_{W_i}}J(f),f],~f\in{M_+}$. Similarly $[{T_{W,v}}_-{T_{W,v}^{\#}}_-(f),f]\\
	=\sum_{i\in{I_{\pm}}}v_i^2|[{\pi_{W_i}}J(f),f]|,~f\in{M_-}$. Hence we have the above inequality.
\end{proof}
	The main result of this section is the converse problem which we will prove in the following theorem.
	\begin{theorem}
		Let $\mathbb{F}=\{f_i\}_{i\in{I}}$ be a frame for $\mathbb{K}$. If $I_{\pm}=\{i\in{I}:\pm[f_i,f_i]\geq{0}\}$ and $M_{\pm}=\overline{span\{f_i:i\in{I_\pm}\}}$ then $\mathbb{F}$ is a $J$-frame if $M_\pm\cap{M_\pm^{[\perp]}}=\{0\}$ and there exist constants $B_-\leq{A_-}<0<A_+\leq{B_+}$ such that
		\begin{equation} 
		A_{\pm}[f,f]\leq\sum_{i\in{I_{\pm}}}v_i^2|[{\pi_{W_i}}J(f),f]|~{\leq}~B_{\pm}[f,f],~\forall~f\in{M_\pm}
		\end{equation}
	\end{theorem}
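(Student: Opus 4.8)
The plan is to verify the definition of a $J$-frame directly: writing $T=T_{W,v}$ for the synthesis operator of $\mathbb F$ and $T_\pm=TP_\pm$, one must show that $R(T_+)$ is a maximal uniformly $J$-positive subspace of $\mathbb K$ and $R(T_-)$ a maximal uniformly $J$-negative one. Throughout one uses that, being a frame for $\mathbb K$, the family $\{f_i\}_{i\in I}$ is Bessel (with some bound $B_0$) and $T$ is surjective, and that every subfamily $\{f_i\}_{i\in I_\pm}$ is then Bessel with the same bound; also $\operatorname{span}\{f_i:i\in I_\pm\}\subseteq R(T_\pm)\subseteq M_\pm$ and $R(T)=R(T_+)+R(T_-)$. (Here $W_i=\overline{\operatorname{span}}\{f_i\}$, so a summand $v_i^2[\pi_{W_i}J(f),f]$ equals $\|f_i\|_J^{-2}v_i^2|[f,f_i]|^2\ge 0$.)

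First I would settle the sign of the forms. For $f\in M_+$ the middle term of the $+$ inequality is a sum of non-negative numbers, so $0\le B_+[f,f]$ with $B_+>0$ forces $[f,f]\ge0$, i.e.\ $M_+\in\mathbb P^{+}$; dually, using $B_-<0$ in the $-$ inequality, $M_-\in\mathbb P^{-}$. Combined with the hypothesis $M_\pm\cap M_\pm^{[\perp]}=\{0\}$, the Gramians $G_{M_+}=\pi_{M_+}J|_{M_+}$ and $-G_{M_-}=-\pi_{M_-}J|_{M_-}$ are bounded, self-adjoint, positive and \emph{injective} on the Hilbert spaces $M_+$, $M_-$. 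The heart of the argument is to upgrade this to uniform definiteness. Since $f_i\in M_+$ for $i\in I_+$ and $f\in M_+$, one has $[f,f]=\langle G_{M_+}f,f\rangle$ and $[f,f_i]=\langle G_{M_+}f,f_i\rangle$, so the \emph{lower} $+$ inequality rewrites (up to the uniformly bounded weights $\|f_i\|_J^{-2}v_i^2$, which I absorb into constants) as
\[
A_+\langle G_{M_+}f,f\rangle\;\le\;\sum_{i\in I_+}|\langle G_{M_+}f,f_i\rangle|^2\;\le\;B_0\,\|G_{M_+}f\|_J^2,\qquad f\in M_+,
\]
the last step being the Bessel bound applied to $G_{M_+}f\in M_+$. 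Thus $A_+G_{M_+}\le B_0G_{M_+}^2$, and since $G_{M_+}\ge0$ is injective the spectral theorem gives $0\notin\sigma(G_{M_+})$, i.e.\ $G_{M_+}\ge\frac{A_+}{B_0}I$, equivalently $[f,f]\ge\frac{A_+}{B_0}\|f\|_J^2$ on $M_+$; so $M_+$ is uniformly $J$-positive and $G_{M_+}$ is invertible. The same computation on the lower $-$ inequality (with $G_{M_-}$ replaced by $-G_{M_-}$ and $A_+$ by $|A_-|$) shows $M_-$ is uniformly $J$-negative.

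It remains to handle closedness and maximality. Rewriting the $+$ inequalities through $G_{M_+}$ as above and using that $G_{M_+}^{1/2}$ is now invertible, one gets a Hilbert-space lower frame bound for $\{f_i\}_{i\in I_+}$ on $M_+$; hence this family is a frame sequence, $R(T_+)$ is closed and $R(T_+)=M_+$, and likewise $R(T_-)=M_-$. Since $T$ is onto, $\mathbb K=R(T)=R(T_+)+R(T_-)=M_++M_-$, and as $M_+$ is uniformly $J$-positive while $M_-$ is uniformly $J$-negative we have $M_+\cap M_-=\{0\}$, so $\mathbb K=M_+\dotplus M_-$. Finally, if $M_+\subsetneq L$ with $L$ uniformly $J$-positive, pick $x\in L\setminus M_+$ and write $x=x_++x_-$ with $x_\pm\in M_\pm$, $x_-\ne0$; then $x_-=x-x_+\in L\cap M_-$ would be simultaneously $J$-positive (in $L$) and $J$-negative (in $M_-$), which is absurd. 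Hence $R(T_+)=M_+$ is a maximal uniformly $J$-positive subspace, and dually $R(T_-)=M_-$ is maximal uniformly $J$-negative, so $\mathbb F$ is a $J$-frame.

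The main obstacle is the uniform-definiteness step: read at face value, the lower $\pm$ inequalities only bound $[f,f]$ from above, and the genuinely useful Hilbert-norm lower bound appears only after one passes to the Gramian and plays the frame constant $A_\pm$ against the ambient Bessel constant $B_0$. This is exactly where the non-degeneracy hypothesis $M_\pm\cap M_\pm^{[\perp]}=\{0\}$ is used — to make $G_{M_\pm}$ injective, so that $A_\pm G_{M_\pm}\le B_0 G_{M_\pm}^2$ actually yields $0\notin\sigma(G_{M_\pm})$ rather than merely $\sigma(G_{M_\pm})\subseteq\{0\}\cup[A_\pm/B_0,\infty)$. A secondary point one must not skip is the frame-sequence observation, since "maximal uniformly $J$-definite" is only meaningful once $R(T_\pm)$ is known to be closed.
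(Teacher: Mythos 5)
Your proof is correct and reaches the same two milestones as the paper's --- uniform $J$-definiteness of $M_\pm$ and the identity $R(T_{{W,v}_\pm})=M_\pm$ --- but by genuinely different means at both points. For the first, the paper simply cites Theorem 3.15 of \cite{khp2017} as a black box; you re-derive it on the spot: passing to the Gramian, playing the lower constant $A_+$ against the ambient Bessel bound to get $A_+G_{M_+}\le B_0G_{M_+}^2$, and using that a positive operator with spectrum contained in $\{0\}\cup[A_+/B_0,\infty)$ which is moreover injective (this is exactly where the hypothesis $M_\pm\cap M_\pm^{[\perp]}=\{0\}$ enters) cannot have $0$ as an isolated eigenvalue, hence is bounded below. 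This makes the argument self-contained; the only caveat is your absorption of the weights $v_i^2\|f_i\|_J^{-2}$ into constants, which is harmless provided the Bessel hypothesis is read as applying to the weighted system --- an ambiguity that sits in the theorem's statement itself, since it conflates the vector frame $\{f_i\}$ with the fusion data $(W_i,v_i)$. For the second milestone the paper converts the hypothesis into the operator inequality $A\pi_{M_+}\le\pi_{M_+}JT_{{W,v}_+}T_{{W,v}_+}^{*}J\pi_{M_+}\le B\pi_{M_+}$ and invokes Douglas's range-inclusion theorem before unwinding the projections; you instead observe that, once $[\cdot,\cdot]$ is equivalent to the Hilbert norm on $M_\pm$, the hypothesis is literally a frame inequality for $\{f_i\}_{i\in I_\pm}$ in $M_\pm$, so the partial synthesis operator has closed range equal to $M_\pm$ --- a more elementary and arguably cleaner path. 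The endgame (surjectivity of the synthesis operator gives $\mathbb{K}=M_++M_-$, whence maximality of each summand) is common to both, though your explicit maximality argument via the decomposition $x=x_++x_-$ supplies a step the paper only asserts.
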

	\begin{proof}
		Let $M_+$ be non-degenerated subspace of $\mathbb{K}$ and there exist constants $0<A_+\leq{B_+}$ such that 
		$$A_{+}[f,f]\leq\sum_{i\in{I_{\pm}}}v_i^2|[{\pi_{W_i}}J(f),f]|~{\leq}~B_{+}[f,f],~\forall~f\in{M_+}$$
		then, by Theorem 3.15 of \cite{khp2017}, $M_+$ is uniformly $J$-positive. So, $\exists$ a real number $\alpha>0$ such that
		$$\alpha\|\pi_{M_+}(f)\|^2\leq[\pi_{M_+}(f),\pi_{M_+}(f)]\leq\|\pi_{M_+}(f)\|^2,~\forall~f\in\mathbb{K}.$$
		Therefore we can say that there exist constants $0<A\leq{B}$ such that 
		$$A\|\pi_{M_+}(f)\|^2\leq\|T_{{W,v}_+}^{\#}\pi_{M_+}(f)\|^2~{\leq}~B\|\pi_{M_+}(f)\|^2,~\forall~f\in\mathbb{K}.$$ 
		The above equation can be written as
		$$A\pi_{M_+}\leq\pi_{M_+}JT_{{W,v}_+}T_{{W,v}_+}^{*}J\pi_{M_+}\leq{B}\pi_{M_+}.$$
		Then by Douglas theorem \cite{rgd} we have $R(\pi_{M_+}JT_{{W,v}_+})=R(\pi_{M_+})=M_+$. Furthermore, $\pi_{J(M_+)}(R(T_{{W,v}_+}))=R(\pi_{J(M_+)}T_{{W,v}_+})=R((J\pi_{M_+}J)T_{{W,v}_+})=J(R(\pi_{M_+}JT_{{W,v}_+}))=J(M_+)$. So, we have $\pi_{J(M_+)}(R(T_{{W,v}_+}))=J(M_+)$. Therefore, taking the pre-image of $\pi_{J(M_+)}(R(T_{{W,v}_+}))$ by $\pi_{J(M_+)}$, we have
		$$\mathbb{K}=R(T_{{W,v}_+}){\oplus}J(M_+)^\perp\subseteq~M_+\oplus{M_+^{[\perp]}}=\mathbb{K}.$$ 
		Thus, $R(T_{{W,v}_+})=M_+$ and $\mathbb{F}_+$ is a frame for $M_+$. Analogously, $\mathbb{F}_-=\{f_i\}_{i\in{I_-}}$ is a frame for $M_-$. Finally since $\mathbb{F}$ is a frame for $\mathbb{K}$,
		$$\mathbb{K}=R(T_{{W,v}})=R(T_{{W,v}_+})+R(T_{{W,v}_-})$$, which proves the maximality of $R(T_{{W,v}_\pm})$. Hence, $\mathbb{F}$ is a $J$-frame for $\mathbb{K}$.
	\end{proof}
\begin{remark}
	From lemma \ref{LEMMA} we know that if $\{(W_i,v_i):i\in{I}\}$ is a $J$-fusion frame for $\mathbb{K}$ then $\{(J(W_i),v_i):i\in{I}\}$ is also a $J$-fusion frame for $\mathbb{K}$. Now let $f\in{M_+}$, then $J(f)\in{J(M_+)}$. So, 
	\begin{eqnarray*}
		[\pi_{JW_i}JJ(f),J(f)] &=[\pi_{JW_i}f,J(f)] &=[J\pi_{W_i}J(f),J(f)]\\
		&=[\pi_{W_i}J(f),f]
	\end{eqnarray*}
	In terms of the inequality (\ref{JFFEQ}) we have
	\begin{equation*}
	A_{\pm}[J(f),J(f)]\leq\sum_{i\in{I_{\pm}}}v_i^2|[{\pi_{J(W_i)}}JJ(f),J(f)]|~{\leq}~B_{\pm}[J(f),J(f)],~\forall J(f)\in{J(M_\pm)}.
	\end{equation*}
	So, we can say that $\{(J(W_i),v_i):i\in{I}\}$ is a $J$-fusion frame for $\mathbb{K}$ with the same $J$-fusion frame bounds.
\end{remark}
	
	\subsection{Bounds of $J$-fusion frame}
	\begin{definition}
		Let $\mathbb{F}=\{(W_i,v_i):i\in{I}\}$ be a $J$-fusion frame for $\mathbb{K}$, then there exist constants $B_-$, $A_-$, $A_+$ and $B_+$ such that $-\infty<B_-\leq A_-<0<A_+\leq B_+<\infty$. These constants are the \textit{$J$-fusion frame bounds} of the $J$-fusion frame $\mathbb{F}$ in $\mathbb{K}$. If these bounds are optimal then they are called optimal $J$-fusion frame bounds.
	\end{definition}
	\begin{definition}\label{RMM}
		The \textit{reduced minimum modulus} $\gamma(T)$ of an operator $T\in{L(\mathbb{H},\mathbb{K})}$ is defined by
		\begin{equation*}
		\gamma(T)=inf\{\|Tx\|:x\in N(T )^{\perp}, \|x\|=1\}.
		\end{equation*}
	\end{definition}
	It is well known that $\gamma(T)=\gamma(T^\ast)=\gamma(TT^\ast)^{\frac{1}{2}}$.
	
	\vspace{0.2 cm}
	We want to calculate $J$-fusion frame bounds of a $J$-fusion frame in a Krein space. Let $\mathbb{F}=\{(W_i,v_i):i\in{I}\}$ be a $J$-frame of subspaces for the Krein space $\mathbb{K}$. Then for all $f\in{M_+}$ we have
	\begin{eqnarray}
	\sum_{i\in{I_+}}v_i^2[\pi_{W_i}J(f),f] 
	&&=\|T_{{W,v}_+}^{\#}(f)\|_{J_2}^2  \leq \|T_{{W,v}_+}^{\#}\|_{J_2}^2\|f\|_J^2 \nonumber\\
	&& \leq \frac{1}{\gamma(G_{M_+})}\|T_{{W,v}_+}\|_J^2~[f,f]\nonumber
	\end{eqnarray}
	Comparing with the inequality (\ref{JFFEQ}), we have $B_+=\frac{1}{\gamma(G_{M_+})}\|T_{{W,v}_+}\|_J^2$.\\
	
	Now, since $N({T_{{W,v}_+}^{\#}}^{[\perp]})=J(M_+)$, if $f\in{M_+}$,\\
	 Hence,
	\begin{equation*}
	\begin{split}
	\sum_{i\in{I_+}}v_i^2[\pi_{W_i}J(f),f]  &   =\|T_{{W,v}_+}^{\#}(f)\|_{J_2}^2\\
	& = \|T_{{W,v}_+}^{\#}\pi_{J(M_+)}(f)\|_{J_2}^2\\
	& \geq \gamma(T_{{W,v}_+})^2\|\pi_{JM_+}(f)\|_J^2\\
	& =\gamma(T_{{W,v}_+})^2\|\pi_{M_+}J(f)\|_J^2\\
	& =\gamma(T_{{W,v}_+})^2\|G_{M_+}(f)\|_J^2\\
	& = \gamma(T_{{W,v}_+})^2\gamma(G_{M_+})^2\|f\|_J^2\\
	& \geq \gamma(T_{{W,v}_+})^2\gamma(G_{M_+})^2~[f,f]
	\end{split}
	\end{equation*}
	Comparing with the inequality (\ref{JFFEQ}) we have $A_+=\gamma(T_{{W,v}_+})^2\gamma(G_{M_+})^2$. Similarly we have $B_-=-\frac{1}{\gamma(G_{M_-})}
	\|T_{{W,v}_-}\|_J^2$ and $A_-=-\gamma(T_{{W,v}_-})^2\gamma(G_{M_-})^2$.
	
	Here of course the $J$-fusion frame bounds calculated above are not optimal.\\
	
	The above discussion can be summerized as follows
	\begin{theorem}
		Let $\mathbb{F}=\{(W_i,v_i):i\in{I}\}$ be a $J$-frame of subspaces for the Krein space $\mathbb{K}$ with optimal $J$-fusion frame bounds $B_-$, $A_-$, $A_+$ and $B_+$. Then we have the following inequality:
		$-\frac{1}{\gamma(G_{M_-})}
		\|T_{{W,v}_-}\|_J^2\leq{B_-}\leq{A_-}\leq-\gamma(T_{{W,v}_-})^2\gamma(G_{M_-})^2<0
		<\gamma(T_{{W,v}_+})^2\gamma(G_{M_+})^2\leq{A_+}\leq{B_+}\leq\frac{1}{\gamma(G_{M_+})}\|T_{{W,v}_+}\|_J^2$.
	\end{theorem}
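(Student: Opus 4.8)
The plan is to recognise that this statement merely repackages the estimates derived in the paragraph immediately preceding it, together with the extremal meaning of the word \emph{optimal}. First I would record precisely what that computation yields: for every $f\in M_+$,
\[
\gamma(T_{{W,v}_+})^2\gamma(G_{M_+})^2\,[f,f]\le\sum_{i\in I_+}v_i^2[\pi_{W_i}J(f),f]\le\frac{1}{\gamma(G_{M_+})}\|T_{{W,v}_+}\|_J^2\,[f,f],
\]
and the analogous two-sided bound on $M_-$ for the definite form $-[\cdot,\cdot]$. The upper estimate comes from $\|T_{{W,v}_+}^{\#}f\|_{J_2}^2=\sum_{i\in I_+}v_i^2[\pi_{W_i}J(f),\pi_{W_i}J(f)]$ together with $\|T_{{W,v}_+}^{\#}\|_{J_2}\le\|T_{{W,v}_+}\|_J$ and the inequality $\|f\|_J^2\le\gamma(G_{M_+})^{-1}[f,f]$ on $M_+$; the lower estimate uses $T_{{W,v}_+}^{\#}(\pi_{M_+}Jf)=T_{{W,v}_+}^{\#}f$, the identities $\gamma(T)=\gamma(T^{*})=\gamma(TT^{*})^{1/2}$ applied to $T_{{W,v}_+}^{\#}$, and the fact that $\pi_{M_+}Jf=G_{M_+}f$ is bounded below by $\gamma(G_{M_+})$ on $M_+$.

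I would then invoke the definition of optimal $J$-fusion frame bounds, according to which $A_+$ is the largest and $B_+$ the smallest constant for which $(\ref{JFFEQ})$ holds on $M_+$. Since the displayed chain exhibits $\gamma(T_{{W,v}_+})^2\gamma(G_{M_+})^2$ as an admissible lower bound and $\gamma(G_{M_+})^{-1}\|T_{{W,v}_+}\|_J^2$ as an admissible upper bound, optimality forces $\gamma(T_{{W,v}_+})^2\gamma(G_{M_+})^2\le A_+$ and $B_+\le\gamma(G_{M_+})^{-1}\|T_{{W,v}_+}\|_J^2$, while $A_+\le B_+$ is immediate from $(\ref{JFFEQ})$. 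Running the identical argument for $\{(W_i,v_i):i\in I_-\}$, which is a fusion frame for $(M_-,-[\cdot,\cdot])$, produces $-\gamma(G_{M_-})^{-1}\|T_{{W,v}_-}\|_J^2\le B_-\le A_-\le-\gamma(T_{{W,v}_-})^2\gamma(G_{M_-})^2<0$; combining the two chains, and noting that $0<A_+$ is part of the sign convention for $J$-fusion frame bounds, gives the asserted inequality.

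The computations in the inequality chain are routine transcriptions of the material preceding the statement; the one point that genuinely needs attention is that $\gamma(G_{M_\pm})\ne 0$, i.e. that the Gramian operators $G_{M_\pm}$ are positive and boundedly invertible. This is exactly where the hypothesis that $M_+$ is maximal uniformly $J$-positive and $M_-$ maximal uniformly $J$-negative — which is built into $\mathbb{F}$ being a $J$-frame of subspaces — enters: a uniformly $J$-definite closed subspace has a bounded, positive, boundedly invertible Gramian, so the reduced minimum moduli appearing in the bounds are strictly positive and the displayed chains make sense. Everything else is bookkeeping and the extremal characterisation of \emph{optimal}.
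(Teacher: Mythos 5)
Your proposal is correct and follows essentially the same route as the paper: the paper likewise derives the two admissible chains on $M_\pm$ in the paragraph preceding the theorem (upper bound via $\|T_{{W,v}_\pm}^{\#}\|_{J_2}$ and $\|f\|_J^2\leq\gamma(G_{M_\pm})^{-1}|[f,f]|$, lower bound via $T_{{W,v}_\pm}^{\#}\pi_{M_\pm}J=T_{{W,v}_\pm}^{\#}$ on $M_\pm$, $\gamma(T)=\gamma(T^{\ast})$ and $\|G_{M_\pm}f\|_J\geq\gamma(G_{M_\pm})\|f\|_J$) and then observes that the optimal bounds must lie between these admissible ones. Your added remark that uniform $J$-definiteness of $M_\pm$ guarantees $\gamma(G_{M_\pm})>0$ is a point the paper leaves implicit, but it does not change the argument.
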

	
	Let $\{(W_i,v_i):i\in{I}\}$ be a $J$-fusion frame for the Krein space $\mathbb{K}$. Then according to our definition $M_+=\overline{\sum_{i\in{I_+}}W_i}$ and $M_-=\overline{\sum_{i\in{I_-}}W_i}$. So, $M_+$ and $M_-$ are closed uniformly $J$-positive and $J$-negative subspaces respectively. Now, since $(\mathbb{K},[\cdot,\cdot],J)$ be a Krein space, so let $\mathbb{K}=\mathbb{K}^{+}{[\dot{\oplus}]}\mathbb{K}^-$ be the cannonical decomposition of $\mathbb{K}$. Let $K$ be the angular operator of $M_+$ with respect to $\mathbb{K}^+$. Then $\|K\|=\frac{1-\gamma(G_{M_+})}{1+\gamma(G_{M_+})}$, and also the domain of definition of $K$ is $\mathbb{K}^+$. Similarly, let $Q$ be the angular operator of $M_-$ with respect to $\mathbb{K}^-$. Then $\|Q\|=\frac{1-\gamma(G_{M_-})}{1+\gamma(G_{M_-})}$, and also the domain of definition of $Q$ is $\mathbb{K}^-$.

	\subsection{$J$-frame operator and $J$-fusion frame operator}
	Let $(\mathbb{K},[\cdot,\cdot],J)$ be a Krein space and $\{f_i:i\in I\}$ be a $J$-frame in $\mathbb{K}$. Then a careful investigation reveals that the family of vectors $\{f_i:i\in I\}$ is not arbitrarily scattered in the Krein space (In a sense that for a finite dimensional Hilbert space any set of spanning set is a frame). In fact the set of all positive elements form a maximal uniformly $J$-positive subspace $M_+=\overline{span\{f_i:i\in I_+\}}$ and the set of all negative elements form a maximal uniformly $J$-negative subspace $M_-=\overline{span\{f_i:i\in I_-\}}$. Now, if we apply the $J$-frame operator $S^{-1}$ on the $J$-frame vectors then we know that the corresponding image set also decomposes the Krein space into two parts namely $M_+^{[\perp]}$ and $M_-^{[\perp]}$. So, we have a nice distribution for the family of vectors $\{S^{-1}f_i:i\in I\}$.
	
	 Now, let $-\infty<B_-\leq A_-<0<A_+\leq B_+<\infty$ be the optimal $J$-frame bounds for the $J$-frame $\{f_i:i\in I\}$. Let $\{S^{-1}f_i:i\in I\}$ be the cannonical $J$-dual frame for $\{f_i:i\in I\}$ in $\mathbb{K}$. Therefore, the optimal frame bounds of this frame also exists. The next theorem provide us with a relation between the optimal bounds of a given $J$-frame and the corresponding cannonical $J$-dual frame.
	\begin{theorem}\label{BCJFF}
		Let $\{f_i:i\in I\}$ be a $J$-frame for the Krein space $\mathbb{K}$ with optimal frame bounds $-\infty<B_-\leq A_-<0<A_+\leq B_+<\infty$. Then the cannonical $J$-dual frame has optimal frame bounds $-\infty<\frac{1}{A_-}\leq \frac{1}{B_-}<0<\frac{1}{B_+}\leq \frac{1}{A_+}<\infty$.
	\end{theorem}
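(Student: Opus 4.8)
The plan is to work through the fundamental symmetry and reduce the Krein-space statement to a pair of Hilbert-space (fusion/ordinary) frame computations on the uniformly $J$-definite pieces $M_+$ and $M_-$. Since $\{f_i:i\in I\}$ is a $J$-frame, $M_+$ is maximal uniformly $J$-positive and $M_-$ is maximal uniformly $J$-negative, and $\mathbb{F}_\pm=\{f_i:i\in I_\pm\}$ is an ordinary frame for the Hilbert space $(M_\pm,\pm[\cdot,\cdot])$ with optimal bounds $A_\pm,B_\pm$ in the sense of inequality (\ref{JFFEQ}). The $J$-frame operator decomposes as $S=S_+-S_-$ with each $S_\pm$ a $J$-positive operator, and on $M_+$ the operator $S_+$ restricts to the classical frame operator of $\mathbb{F}_+$, whose optimal bounds are precisely $A_+$ and $B_+$; its inverse (as an operator on $M_+$) then has optimal bounds $1/B_+$ and $1/A_+$. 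The analogous statement holds on $M_-$ with a sign.

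First I would record that for the canonical $J$-dual frame $\{S^{-1}f_i:i\in I\}$ the relevant positive/negative index sets are unchanged: $[S^{-1}f_i,S^{-1}f_i]$ has the same sign as $[f_i,f_i]$ because $S^{-1}$ maps $M_\pm$ onto $M_\pm^{[\perp]}$'s complement — more precisely, by the geometry described before the theorem, $S^{-1}$ sends the uniformly $J$-positive subspace $M_+$ to a uniformly $J$-positive subspace and $M_-$ to a uniformly $J$-negative one, preserving definiteness as in \ref{DPD}. Hence the span of the positive (resp. negative) vectors of the dual frame is again a maximal uniformly $J$-definite subspace, so the dual family really is a $J$-frame and Definition-level quantities make sense. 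Then I would compute: for $f$ in the dual positive subspace, $\sum_{i\in I_+}|[S^{-1}f_i,f]|^2$ (with the appropriate weights) equals $[S_+^{-1}f,f]$ evaluated in the Hilbert space $(M_+,[\cdot,\cdot])$, and since the optimal bounds of $S_+$ are $A_+\le B_+$, spectral calculus gives $\tfrac{1}{B_+}[f,f]\le [S_+^{-1}f,f]\le \tfrac{1}{A_+}[f,f]$ with these constants optimal. The negative side is identical after multiplying through by $-1$: the optimal bounds of the restriction $-S_-$ on $(M_-,-[\cdot,\cdot])$ are $|A_-|\le|B_-|$, so the inverse has optimal bounds $1/|B_-|\le 1/|A_-|$, which back in the signed convention reads $\tfrac{1}{A_-}\le\tfrac{1}{B_-}<0$.

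The main obstacle I anticipate is bookkeeping rather than anything deep: one must be careful that "$J$-frame operator of the dual frame" is literally $S^{-1}$ restricted and inverted on the correct definite subspace, not on all of $\mathbb{K}$, and that $S$ does not mix the $\pm$ parts in a way that spoils the clean inversion of bounds. Concretely, $S=S_+-S_-$ where $S_+$ is supported on $M_+$ (its kernel is $M_+^{[\perp]}$ after applying $J$) and similarly $S_-$; verifying that $S_+$ and $S_-$ act on complementary subspaces so that $S^{-1}$ splits as $S_+^{-1}$ on one piece and $-S_-^{-1}$ on the other is the crux. Once that splitting is established, the inequality chain $-\infty<\tfrac{1}{A_-}\le\tfrac{1}{B_-}<0<\tfrac{1}{B_+}\le\tfrac{1}{A_+}<\infty$ is immediate from monotonicity of $t\mapsto 1/t$ on the positive and negative half-lines together with optimality of the original bounds (optimality transfers because $x\mapsto x^{-1}$ is an order-reversing bijection, so extremizers map to extremizers). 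I would close by remarking that the optimal frame bounds of the dual frame in the Krein sense are obtained by comparing with inequality (\ref{JFFEQ}) applied to the dual family, exactly as was done for $\mathbb{F}$ itself.
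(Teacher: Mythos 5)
Your proposal is correct and follows essentially the same route as the paper: decompose the $J$-frame operator as $S=S_+-S_-$, observe that $S_+|_{M_+}$ is the classical frame operator of $\{f_i:i\in I_+\}$ in the Hilbert space $(M_+,[\cdot,\cdot])$ with optimal bounds $A_+\leq B_+$, invert to get $\tfrac{1}{B_+}\leq (S_+|_{M_+})^{-1}\leq\tfrac{1}{A_+}$, and transfer these to the canonical $J$-dual via the splitting of $S^{-1}$ (the paper records this as $(S_+|_{M_+})^{-1}=S^{-1}_+|_{M_-^{[\perp]}}$), with the negative side handled symmetrically. The point you flag as the crux --- that $S^{-1}$ splits cleanly across the two definite pieces --- is exactly the step the paper invokes, so your outline matches the published argument.
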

	\begin{proof}
		Let $S$ be the $J$-frame operator for the $J$-frame $\{f_i:i\in I\}$. Now, consider the operator $S_+|_{M_+}$, it is a bijective, $J$-positive and $J$-selfadjoint. Also, it is a frame operator for $\{f_i:i\in I_+\}$ in the Hilbert space $(M_+,[\cdot,\cdot])$. So, $A_+I|_{M_+}\leq S_+|_{M_+}\leq B_+I|_{M_+}$. Hence, $\frac{1}{B_+}I|_{M_+}\leq (S_+|_{M_+})^{-1}\leq \frac{1}{A_+}I|_{M_+}$. But we know that $(S_+|_{M_+})^{-1}=S^{-1}_+|_{M_-^{[\perp]}}$ (This follows from \cite{gmmm}, Section $5$). Hence, from the definition of $J$-frame it easily follows that $\frac{1}{B_+}$ and $\frac{1}{A_+}$ are the optimal frame bounds of the frame $\{S^{-1}f_i:i\in I_+\}$. Similarly, we can show that $\frac{1}{A_-}$ and $\frac{1}{B_-}$ are the optimal frame bounds of the frame $\{S^{-1}f_i:i\in I_-\}$. Hence, we establish the result.
	\end{proof}
	The following theorem is the generalization of the fundamental identity for frames in Hilbert spaces \cite{bcek}.
	\begin{theorem}
		Let $\{f_i:i\in I\}$ be a $J$-frame for the Krein space $\mathbb{K}$ with cannonical $J$-dual frame $\{S^{-1}f_i:i\in I\}$. Then for all $I_1\subset I$ and for all $f\in \mathbb{K}$ we have
		$$\sum_{i\in{I_1}}\sigma_i|[f,f_i]|^2-\sum_{i\in{I}}\sigma_i|[S_{I_1}f,S^{-1}f_i]|^2=\sum_{i\in{I_1^c}}\sigma_i|[f,f_i]|^2-\sum_{i\in{I}}\sigma_i|[S_{I_1^c}f,S^{-1}f_i]|^2,$$ where $\sigma_i=1$ if $i\in{I_+}$ and $\sigma_i=-1$ if $i\in{I_-}$.
	\end{theorem}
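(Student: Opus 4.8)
The plan is to reduce the identity to a short operator computation, the Krein-space analogue of the argument of \cite{bcek}, resting on the fact that the $J$-frame operator $S$ (given by $Sf=\sum_{i\in I}\sigma_i[f,f_i]f_i$) is bijective and $J$-selfadjoint, so that $S^{-1}=(S^{-1})^{\#}$ as well. For $I_1\subset I$ introduce the partial operator $S_{I_1}f=\sum_{i\in I_1}\sigma_i[f,f_i]f_i$; then $S=S_{I_1}+S_{I_1^{c}}$, and since $[f_i,f]=\overline{[f,f_i]}$ one has
\[
\sum_{i\in I_1}\sigma_i|[f,f_i]|^2=[S_{I_1}f,f],\qquad \sum_{i\in I}\sigma_i|[g,f_i]|^2=[Sg,g]\ \ (g\in\mathbb{K}).
\]

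First I would simplify the mixed sum. Using $J$-selfadjointness of $S^{-1}$, $[S_{I_1}f,S^{-1}f_i]=[S^{-1}S_{I_1}f,f_i]$ for every $i$, so applying the second identity above with $g=S^{-1}S_{I_1}f$ gives
\[
\sum_{i\in I}\sigma_i|[S_{I_1}f,S^{-1}f_i]|^2=[S(S^{-1}S_{I_1}f),S^{-1}S_{I_1}f]=[S_{I_1}f,S^{-1}S_{I_1}f].
\]
Hence the left-hand side of the claimed identity equals $[S_{I_1}f,f]-[S_{I_1}f,S^{-1}S_{I_1}f]=[S_{I_1}f,\,f-S^{-1}S_{I_1}f]$. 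Writing $f=S^{-1}Sf$ and using $S=S_{I_1}+S_{I_1^{c}}$ yields $f-S^{-1}S_{I_1}f=S^{-1}S_{I_1^{c}}f$, so the left-hand side equals $[S_{I_1}f,S^{-1}S_{I_1^{c}}f]$. Running the same computation with $I_1$ and $I_1^{c}$ interchanged shows that the right-hand side equals $[S_{I_1^{c}}f,S^{-1}S_{I_1}f]$.

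It then remains to prove $[S_{I_1}f,S^{-1}S_{I_1^{c}}f]=[S_{I_1^{c}}f,S^{-1}S_{I_1}f]$. By $J$-selfadjointness of $S^{-1}$,
\[
[S_{I_1}f,S^{-1}S_{I_1^{c}}f]=[S^{-1}S_{I_1}f,S_{I_1^{c}}f]=\overline{[S_{I_1^{c}}f,S^{-1}S_{I_1}f]},
\]
so the two sides are complex conjugates of one another; but each side is a difference of real quantities (for instance $[S_{I_1}f,f]=\sum_{i\in I_1}\sigma_i|[f,f_i]|^2$ and $[S_{I_1}f,S^{-1}S_{I_1}f]=\sum_{i\in I}\sigma_i|[S_{I_1}f,S^{-1}f_i]|^2$ are real, and likewise for $I_1^{c}$), hence real, and a real number equals its conjugate. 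This proves the identity.

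I do not expect a genuine obstacle here: the only points needing care are justifying the rearrangements of the unconditionally convergent series defining $Sf$ and $S_{I_1}f$, and applying $[S_{I_1}f,S^{-1}f_i]=[S^{-1}S_{I_1}f,f_i]$ consistently with the signs $\sigma_i$. The structural content — replacing the self-adjointness of the (Parseval) frame operator used in \cite{bcek} by $J$-selfadjointness of $S$ and $S^{-1}$ together with the splitting $S=S_{I_1}+S_{I_1^{c}}$ — is precisely what makes the Hilbert-space proof carry over verbatim to the Krein setting.
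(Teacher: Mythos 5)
Your proof is correct and follows essentially the same route as the paper: both arguments rest on the splitting $S=S_{I_1}+S_{I_1^{c}}$, the $J$-selfadjointness of $S$ and $S^{-1}$, and the elementary algebra of $a+b=I$ (the paper invokes the operator identity $S^{-1}S_{I_1}-(S^{-1}S_{I_1})^2=S^{-1}S_{I_1^{c}}-(S^{-1}S_{I_1^{c}})^2$ and pairs with $g=Sf$, while you equivalently reduce both sides to the cross terms $[S_{I_1}f,S^{-1}S_{I_1^{c}}f]$ and $[S_{I_1^{c}}f,S^{-1}S_{I_1}f]$ and note they are conjugate real numbers). This is a cosmetic rearrangement of the same computation, so no further comparison is needed.
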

	\begin{proof}
		Let $S$ denotes the frame operator for $\{f_i:i\in I\}$. Then we have $S(f)=\sum_{i\in I}\sigma_i[f,f_i]f_i$. Also since $S=S_{I_1}+S_{I_1^c}$, then $I=S^{-1}S_{I_1}+S^{-1}S_{I_1^c}$. From the operator theory we have $S^{-1}S_{I_1}-S^{-1}S_{I_1^c}=S^{-1}S_{I_1}S^{-1}S_{I_1}-S^{-1}S_{I_1^c}S^{-1}S_{I_1^c}$. Then for every $f,g\in{\mathbb{K}}$, we have
		\begin{equation*}
		[S^{-1}S_{I_1}(f),g]-[S^{-1}S_{I_1}S^{-1}S_{I_1}(f),g]=[S_{I_1}(f),S^{-1}g]-[S^{-1}S_{I_1}(f),S_{I_1}S^{-1}g]
		\end{equation*}
		Now if we choose $g=S(f)$, then the above equation reduces to 
		\begin{equation*}
		=[S_{I_1}(f),f]-[S^{-1}S_{I_1}(f),S_{I_1}(f)]=\sum_{i\in{I_1}}\sigma_i|[f,f_i]|^2-\sum_{i\in{I}}\sigma_i|[S_{I_1}f,S^{-1}f_i]|^2
		\end{equation*}
		Now replacing $I_1$ by $I_1^c$ we can have the other part of the equality. Combining we finally get
		\begin{equation*}
		\sum_{i\in{I_1}}\sigma_i|[f,f_i]|^2-\sum_{i\in{I}}\sigma_i|[S_{I_1}f,S^{-1}f_i]|^2=\sum_{i\in{I_1^c}}\sigma_i|[f,f_i]|^2-\sum_{i\in{I}}\sigma_i|[S_{I_1^c}f,S^{-1}f_i]|^2
		\end{equation*}
	\end{proof}
	Theorem \ref{BCJFF} can easily be generalized in the setting for $J$-fusion frame. We only state the result in the following theorem. We note that Casazza et al. \cite{ckl} calculated the cannonical fusion frame bounds for Hilbert spaces in a more general setting, however, an error was pointed out by Gavruta \cite{pg}. But in the current work we calculated the cannonical $J$-fusion frame bounds in the following theorem different from their approaches due to the nice structure of $J$-fusion frame for Krein spaces.
	\begin{theorem}
		Let $\{(W_i,v_i):i\in I\}$ be a $J$-fusion frame for the Krein space $\mathbb{K}$ with optimal frame bounds $-\infty<B_-\leq A_-<0<A_+\leq B_+<\infty$. Then the cannonical $J$-dual fusion frame has optimal frame bounds $-\infty<\frac{1}{A_-}\leq \frac{1}{B_-}<0<\frac{1}{B_+}\leq \frac{1}{A_+}<\infty$.
	\end{theorem}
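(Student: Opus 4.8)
The plan is to follow the proof of Theorem~\ref{BCJFF} verbatim at the structural level, exploiting the fact that a $J$-fusion frame decomposes into two ordinary (Hilbert space) fusion frames --- one for $(M_+,[\cdot,\cdot])$ and one for $(M_-,-[\cdot,\cdot])$ --- so that passing to the canonical $J$-dual reduces to inverting a positive, bounded, boundedly invertible operator on a Hilbert space and reading the bounds off directly.

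First I would write $S:=S_{W,v}=S_{{W,v}_+}-S_{{W,v}_-}$ for the $J$-fusion frame operator, recalling from the preceding subsections that $S$ is bijective and $J$-selfadjoint and that, by the $J$-fusion frame equation (\ref{JFFEQ}), the family $\{(W_i,v_i):i\in I_+\}$ is a fusion frame for the Hilbert space $(M_+,[\cdot,\cdot])$ whose fusion frame operator is exactly $S_{{W,v}_+}\big|_{M_+}$. This operator is bounded, positive and boundedly invertible on $(M_+,[\cdot,\cdot])$, and optimality of the bounds $A_+,B_+$ is equivalent to the sharp operator inequality $A_+\,\mathrm{Id}_{M_+}\le S_{{W,v}_+}\big|_{M_+}\le B_+\,\mathrm{Id}_{M_+}$. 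Since inversion is order-reversing on positive invertible operators and preserves sharpness of scalar bounds for a single such operator, this yields $\tfrac1{B_+}\,\mathrm{Id}_{M_+}\le\big(S_{{W,v}_+}\big|_{M_+}\big)^{-1}\le\tfrac1{A_+}\,\mathrm{Id}_{M_+}$ with $\tfrac1{B_+},\tfrac1{A_+}$ optimal; the identical argument applied to $(M_-,-[\cdot,\cdot])$ and $S_{{W,v}_-}\big|_{M_-}$ produces optimal bounds $\tfrac1{A_-},\tfrac1{B_-}$ for the negative part.

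It then remains to identify these inverses with the two halves of the fusion frame operator of the canonical $J$-dual fusion frame $\{(S^{-1}W_i,v_i):i\in I\}$. As in the discussion preceding Theorem~\ref{BCJFF}, $S^{-1}$ carries the maximal uniformly $J$-positive subspace $M_+$ bijectively onto $M_-^{[\perp]}$ and $M_-$ onto $M_+^{[\perp]}$; consequently the $J$-dual family is again a $J$-fusion frame with the same index splitting $I=I_+\cup I_-$, its positive subspace is $M_-^{[\perp]}$, and the restriction to $M_-^{[\perp]}$ of its fusion frame operator is precisely $\big(S_{{W,v}_+}\big|_{M_+}\big)^{-1}$ (and symmetrically on $M_+^{[\perp]}$). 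Reading the bounds off the two operator inequalities above then gives the chain $-\infty<\tfrac1{A_-}\le\tfrac1{B_-}<0<\tfrac1{B_+}\le\tfrac1{A_+}<\infty$. I expect the only delicate point to be the bookkeeping in this last identification: one must check that no rescaling of the weights $v_i$ is hidden in the restriction $\big(S_{{W,v}_+}\big|_{M_+}\big)^{-1}=S^{-1}\big|_{M_-^{[\perp]}}$ --- this is exactly where the subtlety pointed out by Gavruta \cite{pg} in the Hilbert space setting could otherwise intrude --- but here it is bypassed, since we never write the canonical $J$-dual fusion frame down explicitly and only use that its fusion frame operator is the inverse of $S$ on the relevant Hilbert space, whose sharp bounds are already known.
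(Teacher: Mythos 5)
You should first note that the paper itself offers no proof of this statement: it simply remarks that Theorem~\ref{BCJFF} ``can easily be generalized'' and states the result. Your structural plan --- restrict $S_{{W,v}_+}$ to the Hilbert space $(M_+,[\cdot,\cdot])$, read off the sharp operator inequality $A_+\,\mathrm{Id}_{M_+}\le S_{{W,v}_+}\big|_{M_+}\le B_+\,\mathrm{Id}_{M_+}$, invert, and repeat on $(M_-,-[\cdot,\cdot])$ --- is exactly the route the proof of Theorem~\ref{BCJFF} takes for vector $J$-frames, so up to that point you are reproducing the intended argument.

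The genuine gap is the step you yourself flag and then wave away: the identification of $\bigl(S_{{W,v}_+}\big|_{M_+}\bigr)^{-1}$ with (the positive half of) the fusion frame operator of the canonical $J$-dual fusion frame. For vector frames this identification is a one-line computation, since the frame operator of $\{S^{-1}f_i\}$ is $\sum_i\sigma_i[\,\cdot\,,S^{-1}f_i]S^{-1}f_i=S^{-1}SS^{-1}=S^{-1}$. For fusion frames it fails in general: the fusion frame operator of $\{(S^{-1}W_i,v_i)\}$ is $\sum_i\sigma_iv_i^2\pi_{S^{-1}W_i}J$, and $\pi_{S^{-1}W_i}$ is \emph{not} $S^{-1}\pi_{W_i}S$ or any conjugate of $\pi_{W_i}$ unless $S^{-1}$ acts (up to scalar) isometrically on the relevant subspaces --- this is precisely the error of Casazza et al.\ that Gavruta \cite{pg} corrected, and which the paper explicitly acknowledges must be circumvented. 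Your proposed bypass (``we never write the canonical $J$-dual fusion frame down explicitly and only use that its fusion frame operator is the inverse of $S$'') is circular: if the canonical $J$-dual fusion frame is the family $\{(S^{-1}W_i,v_i)\}$, then the assertion that its fusion frame operator is $S^{-1}$ is the very claim that needs proof, and if instead you \emph{define} the dual to be whatever family has fusion frame operator $S^{-1}$, the theorem becomes contentless until you exhibit that family. A complete proof must either invoke the Krein-space structure to show that $S^{-1}$ restricted to $M_+$ is a positive multiple of an isometry onto $M_-^{[\perp]}$ in each $W_i$ (so that $\pi_{S^{-1}W_i}$ transforms correctly), or use Gavruta-type reweighted duals $\{(S^{-1}W_i,\,v_i\|S^{-1}\pi_{W_i}\|)\}$ and redo the bound computation; as written, the final chain of inequalities is not established.
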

	
	\subsection{Bounded linear operators acting on $J$-fusion frames}
	In this section we want to address the problem of characterizing those bounded operators $T:\mathbb{K}\rightarrow \mathbb{K}$, such that $\{(T(W_i),v_i):i\in I\}$ is a $J$-fusion frame for $\mathbb{K}$, if $\{(W_i,v_i):i\in I\}$ is a $J$-fusion frame for $\mathbb{K}$. Now to form $J$-fusion frame, the subspaces $T(W_i)$ must be uniformly definite. The image of a closed, uniformly definite subspace under a bounded invertible linear operator may be a neutral subspace.
	
	We have the following example.
	\begin{example}
		We define an inner product $[\cdot,\cdot]$ on the sequence space $\ell^2$ in the following way. Let $\{e_n\}_{n\in\mathbb{N}}$ be the countable orthonormal basis where $[e_{2n},e_{2n}]=-1,~[e_{2n-1},e_{2n-1}]=1$ for all $n\in\mathbb{N}$, and also $[e_i,e_j]=0$ for $i\neq j$. The fundamental symmetry $J:\ell^2\rightarrow\ell^2$ is defined by $J(\sum_{n\in\mathbb{N}}c_ne_n)=(\sum_{n\in\mathbb{N}}\sigma_nc_ne_n)$, where $\sum_{n\in\mathbb{N}}c_ne_n\in\ell^2$ and $\sigma_n=1$, if $n$ is odd, $\sigma_n= -1$, if $n$ is even. Then the triple $(\ell^2,[\cdot,\cdot],J)$ forms a Krein space. Consider the invertible linear operator $T:\ell^2\rightarrow\ell^2$ defined by $T(\{c_n\}_{n\in\mathbb{N}})=(c_1+c_2,c_1+2c_2,c_3,\ldots)$. Now if $M=span\{e_1\}$, then $M$ is a uniformly $J$-positive definite subspace. But $T(M)=span\{(1,1,0,\ldots)\}$ is a neutral subspace of $\ell^2$.
	\end{example}
	Now we consider some restrictions on the linear operator $T$, so that $\{(T(W_i),v_i):i\in I\}$ is also a $J$-fusion frame for $\mathbb{K}$. Before we proceed any further we need the following notations.
	
	The set of all neutral vectors in $\mathbb{K}$ is called the neutral part of $\mathbb{K}$ and will be denoted by $\beta^0$. The symbol $\beta^{00}$ will stand for
	$$\beta^{00}=\{x\in\mathbb{K}:~[x,x]=0,~x\neq{0}\}$$
	We denote by $\beta^{++}$ (respectively, $\beta^{--}$) the set consisting of the zero element together with all positive (negative) elements of $\mathbb{K}$ and by $\beta^+$ (respectively, $\beta^-$) the set of all non-negative (non-positive) elements of $\mathbb{K}$. Thus e.g.
	$$\beta^{++}=\{x\in\mathbb{K}:~[x,x]>0~\textmd{or }x={0}\},$$
	$$\beta^{+}=\{x\in\mathbb{K}:~[x,x]\geq{0}\}.$$
	Also, let $\mu^+$ (respectively, $\mu^-$) be the set of all uniformly $J$-positive ($J$-negative) subspaces for $\mathbb{K}$. The set of all maximal subspaces of $\mathbb{K}$ is denoted by $\Psi(\mathbb{K})$ and the set of all regular subspaces of $\mathbb{K}$ is denoted by $\Omega(\mathbb{K})$. Here we would like to mention that $\mu^+\cup\mu^-\subseteq\Omega(\mathbb{K})$.\\
	
	We need the following definitions.
	\begin{definition}\label{DPD}
		Let $T$ be a bounded linear operator on a Krein space $\mathbb{K}$. We say that $T$ \textit{preserves definiteness} if $T(V)\in\mu^+\cup\mu^-$ whenever $V\in\mu^+\cup\mu^-$, where $V$ is a subspace of $\mathbb{K}$. We also say that $T$ \textit{preserves definiteness with sign} if the linear operator preserves definiteness and also the sign of the subspaces $V$ and $T(V)$ remains same i.e. either $V$, $T(V)\in\mu^+$ or $V$, $T(V)\in\mu^-$.
	\end{definition}
	\begin{definition}
		Let $T$ be a bounded linear operator on a Krein space $\mathbb{K}$. We say that $T$ \textit{preserves maximality} if $T(V)\in\Psi(\mathbb{K})$ whenever $V\in\Psi(\mathbb{K})$.
	\end{definition}
	\begin{definition}
		Let $T$ be a bounded linear operator on a Krein space $\mathbb{K}$. We say that $T$ \textit{preserves regularity} if $T(V)\in\Omega(\mathbb{K})$ if $V\in\Omega(\mathbb{K})$.
	\end{definition}
	\begin{theorem}
		Let $T$ be a bounded surjective linear operator on a Krein space $\mathbb{K}$. Also, let\\
		$(i)$ $T$ preserves definiteness with sign.\\
		$(ii)$ $T$ preserves maximality.\\
		$(iii)$ $T$ preserves regularity.\\
		Then $\{(T(W_i),v_i):i\in I\}$ is a $J$-fusion frame for the Krein space $\mathbb{K}$, if $\{(W_i,v_i):i\in I\}$ be a $J$-fusion frame for $\mathbb{K}$.
	\end{theorem}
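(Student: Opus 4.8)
My plan is to verify the two requirements in the definition of a $J$-fusion frame for the family $\{(T(W_i),v_i):i\in I\}$: that each $T(W_i)$ is a closed subspace lying in $\mathbb{P}^{+}\cup\mathbb{P}^{--}$, and that the ranges of the two pieces ${T_{TW,v}}_{+}$, ${T_{TW,v}}_{-}$ of its synthesis operator are a maximal uniformly $J$-positive and a maximal uniformly $J$-negative subspace, respectively. The first thing I would do is the bookkeeping. Since $\{(W_i,v_i)\}$ is a $J$-fusion frame, every $W_i$ lies in $\mu^{+}\cup\mu^{-}$, so by $(i)$ each $T(W_i)$ is uniformly $J$-positive, resp.\ uniformly $J$-negative, \emph{with the same sign}; in particular $T(W_i)$ is closed and, by $(iii)$, regular, hence $T(W_i)\in\mathbb{P}^{+}\cup\mathbb{P}^{--}$ and the index sets $I_{\pm}$ attached to the image family coincide with the original $I_{\pm}$. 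Consequently the relevant subspaces are $M'_{\pm}:=\overline{\sum_{i\in I_{\pm}}T(W_i)}$, and since $M_{\pm}=\overline{\sum_{i\in I_{\pm}}W_i}$ is uniformly $J$-definite (for a $J$-fusion frame $M_{\pm}=R({T_{W,v}}_{\pm})$), applying $(i)$ to $M_{\pm}$ shows $T(M_{\pm})$ is closed, whence a one-line continuity argument gives $M'_{\pm}=\overline{T\big(\sum_{i\in I_{\pm}}W_i\big)}=T(M_{\pm})$.

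Next I would record that $T(M_{\pm})$ is maximal. For a $J$-fusion frame $M_{\pm}=R({T_{W,v}}_{\pm})$ is by definition a maximal uniformly $J$-positive (resp.\ negative) subspace, and being uniformly $J$-definite it is regular; that is, $M_{+}\in\mu^{+}\cap\Psi(\mathbb{K})\cap\Omega(\mathbb{K})$ and $M_{-}\in\mu^{-}\cap\Psi(\mathbb{K})\cap\Omega(\mathbb{K})$. Applying $(i)$, $(ii)$, $(iii)$ in turn, $M'_{+}=T(M_{+})$ is a maximal uniformly $J$-positive regular subspace and $M'_{-}=T(M_{-})$ a maximal uniformly $J$-negative regular subspace; surjectivity of $T$ further gives $M'_{+}+M'_{-}=T(M_{+}+M_{-})=\mathbb{K}$.

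The heart of the proof is then to show $R({T_{TW,v}}_{\pm})=M'_{\pm}$: one always has $\sum_{i\in I_{\pm}}T(W_i)\subseteq R({T_{TW,v}}_{\pm})\subseteq M'_{\pm}$ with dense inclusion, so what must really be shown is that the range is closed (a \emph{proper} subspace of the uniformly $J$-definite $M'_{\pm}$ cannot be maximal). I would obtain this from the fact that $T|_{M_{+}}\colon(M_{+},[\cdot,\cdot])\to(T(M_{+}),[\cdot,\cdot])$ is a bounded surjection between Hilbert spaces which is moreover injective — because $T$ preserves definiteness \emph{with sign}, it cannot collapse a nonzero $J$-positive vector — hence an isomorphism, bounded below, say $\|Th\|_{J}\ge\delta\|h\|_{J}$ for $h\in M_{+}$ (and similarly on $M_{-}$). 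Then each $T|_{W_i}$ is an isomorphism with inverse of norm $\le\delta^{-1}$, the lift $\tilde T_{+}\colon\big(\sum_{i\in I_{+}}\oplus W_i\big)_{\ell_2}\to\big(\sum_{i\in I_{+}}\oplus T(W_i)\big)_{\ell_2}$, $\{h_i\}\mapsto\{Th_i\}$, is a well-defined isomorphism, and the synthesis operators intertwine as $T\,{T_{W,v}}_{+}={T_{TW,v}}_{+}\,\tilde T_{+}$, so that
$$R\big({T_{TW,v}}_{+}\big)=R\big({T_{TW,v}}_{+}\,\tilde T_{+}\big)=R\big(T\,{T_{W,v}}_{+}\big)=T\big(R({T_{W,v}}_{+})\big)=T(M_{+})=M'_{+},$$
and likewise $R({T_{TW,v}}_{-})=M'_{-}$; the same intertwining with the full lift shows $T_{TW,v}$ is bounded, so $\{(T(W_i),v_i)\}$ is a Bessel family to begin with. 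Combined with the previous paragraph, $R({T_{TW,v}}_{+})$ is maximal uniformly $J$-positive and $R({T_{TW,v}}_{-})$ maximal uniformly $J$-negative, which is exactly the assertion.

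The main obstacle, in either route, is the bounded-below property of $T$ restricted to $M_{+}$ and $M_{-}$. One can also carry out the previous step at the level of the fusion-frame inequality~\eqref{JFFEQ} on $M'_{\pm}$, and then the Lemma (the Krein-space form of Gavruta's theorem, $Q_{W_i}T^{\#}=Q_{W_i}T^{\#}Q_{\overline{T(W_i)}}$, with $(iii)$ guaranteeing $T(W_i)$ regular so that $Q_{T(W_i)}$ is defined) is precisely the device that pushes the lower frame bound for $\{(W_i,v_i):i\in I_{+}\}$ over to $\{(T(W_i),v_i):i\in I_{+}\}$ — but the matching Bessel (upper) bound still needs the uniform constant $\delta$ above. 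Thus the delicate point is to extract injectivity of $T|_{M_{\pm}}$ from hypothesis $(i)$: it is exactly what rules out $T$ turning a uniformly definite subspace into a lower-dimensional or neutral one, and if one only assumes $T$ surjective without this the argument breaks, so I would make this injectivity explicit at the outset.
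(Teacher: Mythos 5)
Your proposal is correct, and its first half runs exactly parallel to the paper: you use (i) to transfer the signs of the $W_i$, (ii) to get $T(M_\pm)=\overline{\sum_{i\in I_\pm}T(W_i)}$ maximal, and (iii) to upgrade these to maximal \emph{uniformly} $J$-definite subspaces decomposing $\mathbb{K}$. Where you genuinely diverge is at the end. The paper closes by observing that the old synthesis operator $\theta$ is surjective, that $T\theta$ is therefore surjective, and then asserts that the conclusion "easily follows from the definition"; but $T\theta$ is not the synthesis operator of the image family (it lives on $\big(\sum\oplus W_i\big)_{\ell_2}$, not on $\big(\sum\oplus T(W_i)\big)_{\ell_2}$), and surjectivity of the full map does not by itself give the required identity $R\big({T_{TW,v}}_\pm\big)=T(M_\pm)$ rather than mere dense inclusion. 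Your construction of the lift $\tilde T_\pm$, the intertwining $T\,{T_{W,v}}_\pm={T_{TW,v}}_\pm\,\tilde T_\pm$, and the bounded-below estimate for $T|_{M_\pm}$ supply exactly the step the paper elides, and also yield the Bessel property of the image family, which the paper never checks. Two small caveats: your deduction that $T|_{M_\pm}$ is injective from (i) rests on the convention that the zero subspace is not counted as uniformly $J$-definite (worth stating explicitly, since Definition \ref{DPD} does not say so); and the closedness of $T(M_\pm)$ really comes from maximality via (ii) (a maximal uniformly definite subspace is closed), not from (i) alone. With those points made explicit, your argument is complete and strictly more rigorous than the one in the paper.
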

	\begin{proof}
		Let $I_+=\{i\in{I}:[f_i,f_i]> 0~\textmd{for all~} f_i\in{W_i}\}$ and $I_-=\{i\in{I}:[f_i,f_i]<0~\textmd{for all~} f_i\in{W_i}\}$. For $i\in{I_+}$ we choose $W_i$ where each $W_i$ is a closed, definite subspace of $\mathbb{K}$. Since, $T$ preserves definiteness with sign, hence, $T(W_i)$ is also positive definite for $i\in{I_+}$. Further, $T(W_i)$ is also closed since the image of closed subspace is also closed as $T$ is bounded and linear. Now $M_+=\overline{\sum_{i\in{I_+}}W_i}$ is a maximal uniformly $J$-positive subspace of $\mathbb{K}$. Also, we have $T(M_+)\subset\overline{\sum_{i\in{I_+}}T(W_i)}$. By virtue of our assumptions, $\overline{\sum_{i\in{I_+}}T(W_i)}$ is a positive subspace of $\mathbb{K}$. But, since $T$ preserves maximality, hence, $T(M_+)=\overline{\sum_{i\in{I_+}}T(W_i)}$. Similarly, for $i\in{I_-}$ we can show that $\overline{\sum_{i\in{I_-}}T(W_i)}=T(M_-)\subset\mathbb{K}$ is a maximal negative subspace of $\mathbb{K}$. Now, we use our regularity assumption. Since, $T$ preserves regularity, hence, $T(M_+)$ and $T(M_-)$ are also regular. Using the corollary 7.17 of \cite{ia} we have $T(M_+)$ and $T(M_-)$ are maximal uniformly $J$-positive and $J$-negative subspaces respectively. So, we have a decomposition of $\mathbb{K}$, \textit{i.e.} $\mathbb{K}=T(M_+)\oplus T(M_-)$. Now, let $\theta$ be the synthesis operator for the Bessel sequence of subspaces $\{W_i,v_i):i\in I\}$. Hence, $\theta$ is a surjective bounded linear operator. Then the mapping $T\theta$ is well defined and surjective. Now, from the definition of $J$-fusion frame, it easily follows that $\{(T(W_i),v_i):i\in I\}$ is also a $J$-fusion frame for the Krein space $\mathbb{K}$.
	\end{proof}
	\begin{remark}
		Let the linear operator $T$ considered above is also injective. Then from \cite{ta1979} we know that $T$ is a scaler multiple of $J$-isometry. Therefore, the class of operators are just $J$-unitary operators modulo multiplication by non-zero scalers.
	\end{remark}
	\begin{remark}
		The conditions of the above theorem are sufficient but not necessary. In fact we can get necessary conditions on $T$ which we thought worth mentioning. 
	\end{remark}
From the above discussions we can formulate a necessary condition on $T$. The proof the following theorem easily follows from the previous discussions.
	\begin{theorem}
		Let $\{(W_i,v_i):i\in I\}$ be a $J$-fusion frame for a Krein space $\mathbb{K}$ and $T$ be a bounded surjective linear operator on $\mathbb{K}$ such that $\{(T(W_i),v_i):i\in I\}$ is also a $J$-fusion frame for $\mathbb{K}$. Then there exists a index set $I^{0}$ such that $I^{0}=I$ and $\mathbb{K}=\overline{\sum_{i\in{I^0_+}}T(W_i)}\oplus\overline{\sum_{i\in{I^0_-}}T(W_i)}$ where $I^0_+\cup I^0_-=I^{0}$. Also $\overline{\sum_{i\in{I^0_+}}T(W_i)}$ and $\overline{\sum_{i\in{I^0_-}}T(W_i)}$ are maximal uniformly $J$-definite subspaces but of course with opposite signs.
	\end{theorem}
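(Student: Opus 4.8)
The plan is to notice that, although the original splitting $I=I_+\cup I_-$ of the index set is governed by the signs of the spaces $W_i$, the spaces $T(W_i)$ need not retain those signs (by the example preceding the statement, $T$ may even turn a uniformly $J$-positive space into a neutral one), so the partition one must work with is the one \emph{intrinsic} to the family $\{(T(W_i),v_i):i\in I\}$. First I would invoke the hypothesis that $\{(T(W_i),v_i):i\in I\}$ is a $J$-fusion frame: by the standing convention under which $J$-fusion frames are defined, every $T(W_i)$ then lies in $\mathbb{P}^{+}\cup\mathbb{P}^{--}$, so setting $I^{0}:=I$ and
$$I^0_+=\{i\in I:[g,g]\geq 0\ \text{ for all }\ g\in T(W_i)\},\qquad I^0_-=I\setminus I^0_+,$$
we get $I^0_+\cup I^0_-=I^{0}=I$, and this is exactly the partition that the definition of $J$-fusion frame attaches to $\{(T(W_i),v_i):i\in I\}$.

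Next I would pin down the two summands. Let $\theta$ be the synthesis operator of the $J$-fusion frame $\{(T(W_i),v_i):i\in I\}$, let $\theta_{\pm}=\theta P_{\pm}$ be its positive and negative parts for the splitting $I^0_+\cup I^0_-$, and put $M^0_{\pm}=\overline{\sum_{i\in I^0_{\pm}}T(W_i)}$. From $\sum_{i\in I^0_{\pm}}T(W_i)\subseteq R(\theta_{\pm})\subseteq M^0_{\pm}$ together with $M^0_{\pm}=\overline{\sum_{i\in I^0_{\pm}}T(W_i)}$ one gets $\overline{R(\theta_{\pm})}=M^0_{\pm}$; since the definition of $J$-fusion frame declares $R(\theta_{+})$ (resp. $R(\theta_{-})$) to be a maximal uniformly $J$-positive (resp. $J$-negative) subspace, and such subspaces are closed, we conclude $R(\theta_{\pm})=M^0_{\pm}$. (Equivalently, apply the $J$-fusion frame equation $(\ref{JFFEQ})$ to $\{(T(W_i),v_i):i\in I\}$ to see that $\{(T(W_i),v_i):i\in I^0_{\pm}\}$ is a fusion frame for $(M^0_{\pm},\pm[\cdot,\cdot])$, which again forces $R(\theta_{\pm})=M^0_{\pm}$.) Thus $\overline{\sum_{i\in I^0_+}T(W_i)}$ is maximal uniformly $J$-positive and $\overline{\sum_{i\in I^0_-}T(W_i)}$ is maximal uniformly $J$-negative; in particular each $T(W_i)$, being a subspace of $M^0_+$ or of $M^0_-$, is itself uniformly $J$-definite, and $M^0_+,M^0_-$ are maximal uniformly $J$-definite with opposite signs, which is half of the assertion.

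It then remains to prove $\mathbb{K}=M^0_+\oplus M^0_-$. Since $M^0_+$ is uniformly $J$-positive and $M^0_-$ uniformly $J$-negative, a nonzero common vector would be simultaneously $J$-positive and $J$-negative, so $M^0_+\cap M^0_-=\{0\}$. For the sum I would pass to the canonical decomposition $\mathbb{K}=\mathbb{K}^{+}[\dot{\oplus}]\mathbb{K}^{-}$, write $M^0_+$ as the graph of its angular operator $K\colon\mathbb{K}^{+}\to\mathbb{K}^{-}$ and $M^0_-$ as the graph of its angular operator $Q\colon\mathbb{K}^{-}\to\mathbb{K}^{+}$ — both strict contractions, since the spaces are uniformly definite, exactly as in the discussion of $J$-fusion frame bounds above — and then, given $x=x^{+}+x^{-}\in\mathbb{K}$, solve $(I-QK)u=x^{+}-Qx^{-}$ (legitimate, as $\|QK\|\leq\|Q\|\,\|K\|<1$ makes $I-QK$ invertible on $\mathbb{K}^{+}$) and $w=x^{-}-Ku$, so that $x=(u+Ku)+(Qw+w)$ with $u+Ku\in M^0_+$ and $Qw+w\in M^0_-$. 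Hence $M^0_++M^0_-=\mathbb{K}$ and the sum is direct. (This is just the standard Krein-space fact that a maximal uniformly $J$-positive subspace and a maximal uniformly $J$-negative subspace are complementary, and could instead be quoted from \cite{ia}.)

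The main obstacle is conceptual rather than technical: one must resist keeping the original partition $I_+,I_-$ and instead re-classify $I$ according to the signs of the $T(W_i)$, and one must justify that each $T(W_i)$ stays uniformly $J$-definite so that $I^0_+\cup I^0_-$ genuinely exhausts $I=I^{0}$. The only analytic subtlety is the identification $R(\theta_{\pm})=M^0_{\pm}$ — not merely a dense subspace — which is what guarantees that $M^0_{\pm}$ themselves, rather than their closures, are the maximal uniformly $J$-definite spaces appearing in the decomposition; everything past that is the definition of $J$-fusion frame or the routine angular-operator computation above.
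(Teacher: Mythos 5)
The paper states this theorem without any proof (it is the last result before the acknowledgments), so there is no authorial argument to compare against; judged on its own, your proposal is correct and supplies exactly what the statement needs. The key move — re-partitioning $I$ by the signs of the $T(W_i)$ rather than of the $W_i$, which is forced on you by the example showing $T$ need not preserve signs — is right, and the rest is a faithful unwinding of the definition: the standing convention $T(W_i)\in\mathbb{P}^{+}\cup\mathbb{P}^{--}$ gives $I^0_+\cup I^0_-=I$; the inclusion $\sum_{i\in I^0_{\pm}}T(W_i)\subseteq R(\theta_{\pm})\subseteq M^0_{\pm}$ together with the closedness of maximal uniformly $J$-definite subspaces gives $R(\theta_{\pm})=M^0_{\pm}$, hence the maximality and definiteness claims; and the direct-sum decomposition $\mathbb{K}=M^0_+\oplus M^0_-$ follows from the angular-operator computation (or can simply be quoted as the standard complementarity of maximal uniformly $J$-positive and $J$-negative subspaces). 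The one point worth making explicit in a final write-up is the closedness of maximal uniformly definite subspaces (graph of a contraction defined on all of $\mathbb{K}^{\pm}$), since that is what upgrades $\overline{R(\theta_{\pm})}=M^0_{\pm}$ to $R(\theta_{\pm})=M^0_{\pm}$; you flag this correctly as the only analytic subtlety.
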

The proof of the above theorem easily follows from the discussion above. So we omit the proof.
	
	{\bf Acknowledgments:} The author gratefully acknowledge the facilities provided by South Malda College, Malda. Also the author indebted to Prof. Kallol Paul, Jadavpur University and Prof. Sk. Monowar Hossein, Aliah University for their continuous support in preparing this article.
	
	\bibliographystyle{amsplain}

\begin{thebibliography}{99}
		
		
		\bibitem{ta1979} Ando T.,  \textit{Linear operators on Krein spaces}, Hokkaido University, Sapporo, Japan, (1989).
		
		\bibitem{ta} Ando T., \textit{Projections in Krein spaces}, Linear Algebra Appl. {\bf 431}, (2009), 2346--2358.
		
		\bibitem{bcek} Balan R., Casazza P. G., Edidin D. and Kutyniok G., \textit{A New Identity for Parseval Frames}, Proc. Amer. Math. Soc., {\bf 135}, no. 4, (2007),
		1007--1015.
		
		\bibitem{jb} Bognar J., \textit{Indefinite inner product spaces}, Springer Berlin, 1974.
		
		\bibitem{ckl} Casazza P. G., Kutyniok G. and Li S., \textit{Fusion Frames and Distributed Processing}, Appl. Comput. Harmon. Anal., {\bf 25(1)}, (2008), 114--132.
		
		\bibitem{rgd} Douglas R. G., \textit{On majorization, factorization and range inclusion of operators in Hilbert space}, Proc. Amer. Math. Soc., {\bf 17} (1966),
		413--416.
		
		\bibitem{efw} Esmeral K., Ferrer O. and Wagner E., \textit{Frames in Krein spaces arising from a non-regular $W$-metric}, Banach J. Math. Anal., {\bf 9}, no.1, (2015), 1--16.
		
		\bibitem{pg} Gavruta P., \textit{On the duality of fusion frames}, J. Math. Anal. Appl., {\bf 333(2)}, (2007), 871--879.
		
		\bibitem{gmmm} Giribet J. I., Maestripieri A., Martínez Per\'{i}a F.,  Massey P. G., \textit{On frames for Krein spaces}, J. Math. Anal. Appl., {\bf 393(1)} (2012),
		122--137.
		
		\bibitem{hkp} Hossein Sk. M., Karmakar S., Paul K., \textit{Tight $J$-frames in Krein space and the Associated $J$-frame potential}, Int. J. Math. Anal., {\bf 20} (2016), no.19, 917--931.
		
		\bibitem{ia} Iokhvidov I. S. and Azizov T. Ya,  \textit{Linear operators in spaces with an indefinite metric}, John Wiley $\&$ sons. (1989).
		
		\bibitem{k2018} Karmakar S.,  \textit{$J$-fusion frame and its application in Krein spaces}, Poincare Journal of Analysis \& Applications, {\bf 2018 (2(II))}, Special Issue (IWWFA-III, Delhi), 1--11.
		
		
		\bibitem{khp} Karmakar S., Hossein Sk. M. and Paul K., \textit{$J$-Frame Sequences in Krein Space}, arXiv:1609.08658 [math.FA].
		
		\bibitem{khp2017} Karmakar S., Hossein Sk. M. and K. Paul,  \textit{Properties of $J$-fusion frames in Krein Spaces}, Adv. Oper. Theory, {\bf 2}, no. 3, (2017),  215--227.
		
		
		
		
		
		
		
		
		
		
		
		
		
		
		
		
		
		
		
		
		
		
		
		
		
		
		
		
		
	\end{thebibliography}

\end{document}